\newtheorem{proposition}{Proposition}
\newtheorem{corollary}{Corollary}[proposition]
\newtheorem{assumption}{Assumption}[section]
\newtheorem{example}{Example}[section]
\title{Exact Dynamic Programming for\\Positive Systems with Linear Optimal Cost}
\author{Yuchao Li\thanks{Y. Li was with the Division of Decision and Control Systems, KTH Royal Institute of Technology, when part of this research was conducted. He is now with the School of Computing and Augmented Intelligence, Arizona State University. \href{yuchaoli@asu.edu}{yuchaoli@asu.edu}, \href{yuchao@kth.se}{yuchao@kth.se}}\and Anders Rantzer\thanks{A. Rantzer is a member of the Excellence Center ELLIIT and Wallenberg AI, Autonomous Systems and Software Program (WASP). Financial support was received from the European Research Council (Advanced Grant 834142). \href{anders.rantzer@control.lth.se}{anders.rantzer@control.lth.se}}}
\date{September 2023}
\begin{document}

\maketitle

\begin{abstract}
   Recent work \cite{rantzer2022explicit} formulated a class of optimal control problems involving positive linear systems, linear stage costs, and elementwise constraints on control. It was shown that the problem admits linear optimal cost and the associated Bellman's equation can be characterized by a finite-dimensional nonlinear equation, which is solved by linear programming. In this work, we report exact dynamic programming (DP) theories for the same class of problems. Moreover, we extend the results to a related class of problems where the norms of control are bounded while the optimal costs remain linear. In both cases, we provide conditions under which the solutions are unique, investigate properties of the optimal policies, study the convergence of value iteration, policy iteration, and optimistic policy iteration applied to such problems, and analyze the boundedness of the solution to the associated optimization programs. Apart from a form of the Frobenius-Perron theorem, the majority of our results are built upon generic DP theory applicable to problems involving nonnegative stage costs.     
\end{abstract}

\section{Introduction}
The study of generic nonnegative cost optimal control problems dates back to the thesis and paper \cite{strauch1966negative}, following the seminal earlier research of \cite{blackwell1967positive,blackwell1965discounted}. Owing to its wide range of applications, the followup work has been voluminous and comprehensive. In particular, a unified dynamic programming (DP) framework stemming from the nonnegativity of costs was introduced by Bertsekas in \cite{bertsekas1975monotone,bertsekas1977monotone}. It incorporates and extends many earlier results developed in a variety of contexts, facilitates the analysis of value iteration (VI), policy iteration (PI), and optimistic PI algorithms, and applies to a wide range of problems.

Despite the fruitful development of the general DP theory, when the optimal control problems involve continuous quantities taking values from some Euclidean spaces, the exact solutions are often intractable. A well-known exception is the linear quadratic problem developed by Kalman \cite{kalman1960contributions}, which predated the work \cite{strauch1966negative}. The essential property that enables the exact solution is that the optimal cost is quadratic. Due to this reason, it also admits extensions where computing exact solutions remains practical; see, e.g., \cite{barratt2021stochastic}. The recent work \cite{rantzer2022explicit} formulates another class of optimal control problems, where favorable results can be obtained. The problems involve positive linear systems, linear stage costs, and linear constraints. It is shown that the optimal cost function is linear and the solution can be characterized by a finite-dimensional nonlinear equation with respect to the cost parameters, which is solved by linear programming. Its development relies in part on the nonnegativity of stage cost. However, the full-fledged DP theory pioneered in \cite{bertsekas1975monotone} was not brought to bear.

Apart from the general DP theory, literature on the positive linear systems provides additional tools for the study of the optimal control problem posed in \cite{rantzer2022explicit}. An early classic in this subject is \cite{luenberger1979introduction}, where many related results are synthesized. Within this context, the cornerstone is the Frobenius-Perron theorem, which characterizes the eigenvalues of matrices with nonnegative elements. As a result, it is an essential tool for analyzing the stability of positive linear systems. For recent developments in the control of such systems, see \cite{rantzer2021scalable}.

In this work, we develop the DP theory for the optimal control problems formulated in \cite{rantzer2022explicit} and extend it to a closely related variation. For both classes of problems, our theory relies on the generic DP theory for nonnegative cost problems and a form of the Frobenius-Perron theorem. The contributions are as follows:
\begin{itemize}
    \item[(a)] Characterization of linearity of optimal cost functions and conditions under which the solutions are unique (Prop.~\ref{prop:unique} and Prop.~\ref{prop:unique_norm}).
    \item[(d)] Stability analysis of the optimal policies (Prop.~\ref{prop:optimal_policy} and Prop.~\ref{prop:optimal_policy_norm}).
    \item[(c)] Convergence analysis of VI, PI, and optimistic PI applied to such problems (Props.~\ref{prop:vi}, \ref{prop:pi}, \ref{prop:optimistic_pi} and Props.~\ref{prop:vi_norm}, \ref{prop:pi_norm}, \ref{prop:optimistic_pi_norm}).
    \item[(d)] Conditions under which the associated optimization programs are bounded (Prop.~\ref{prop:linear} and Prop.~\ref{prop:optimization}).
\end{itemize}

The results reported in this work incorporate and extend those given in \cite{rantzer2022explicit}. Although theoretical in nature, these results can be used as starting points for developing approximate solution schemes for similar problems with additional constraints and/or modifications.

\subsection*{Notations}
We use $\Re$ to denote the real line and $\Re^n$ to denote the $n$ dimensional Euclidean space. Its positive orthant is denoted as $\Re^n_+$. We denote by $\Re^{m\times n}$ the set of $m$ by $n$ matrices. By vector, we mean column vector. Row vectors are specified explicitly. We use $0$ to denote the scalar zero or the vector/matrix with all zero elements. It will be clear from the context the meaning of $0$. The notation $1$ is used similarly. For a matrix $M$, we use $|M|$ to denote the matrix obtained by replacing the elements of $M$ with their absolute values. If $M=|M|$ ($M=-|M|$), it is denoted by $M\geq 0$ ($M\leq 0$). If $M\geq 0$ ($M\leq 0$) and its elements are nonzero, we write $M>0$ ($M<0$, respectively). For matrices $M,N$ so that $M-N\geq0$ ($M-N\leq0$), we write $M\geq N$ ($M\leq N$, respectively). The notations $M>N$ and $M<N$ are defined similarly.\footnote{Be aware of the difference in the use of inequality notations in the literature. For example, in \cite[p.~7]{farina2000positive}, matrix $M$ with all positive elements are denoted as $M\gg0$, and $M>0$ means that the elements of $M$ are nonnegative and at least one is positive. Since our analysis does not directly rely on at least one element of matrices of vectors being positive (but indirectly), we do not adopt this convention.} Column and row vectors are special forms of matrices, so the function $|\cdot|$ and relations $\leq$, $\geq$, $<$, $>$ apply to them. Given a vector $v$, we use $\Vert v\Vert$ to denote its norm, and $\Vert v\Vert_*$ to denote the corresponding dual norm.\footnote{The theory developed in this work applies to any vector norm, so we would leave the form of the norm unspecified. Occasionally, we would make comments on the results when particular norms are used.} 

\section{Problem Formulations}\label{sec:formulation}
We now formulate the two types of optimal control problems studied in this work, both of which involve positive linear systems and linear cost. The first class, referred to as the \emph{absolute value bound problem}, was introduced in \cite{rantzer2022explicit}. It was shown that its solution can be characterized by a finite-dimensional nonlinear equation, and thus can be solved exactly by addressing a finite-dimensional linear program. However, the uniqueness of the solution to this nonlinear equation, as well as the convergence of various classical DP algorithms applied to this problem, was left unanswered. A closely related problem referred to as the \emph{norm bound problem}, is new and will be introduced afterwards. 

The absolute value bound problem is defined as for every $x_0\in \Re_+^n$, solve\footnote{Under our conditions to be introduced shortly, the infimum can be attained. Therefore, we use $\min$ in place of $\inf$. The same holds for the norm bound problem.} 
\begin{align*}
	\min_{\{u_k\}_{k=0}^{\infty}}& \quad \sum_{k=0}^{\infty} s'x_k+r'u_k\\
	\mathrm{s.\,t.} & \quad x_{k+1}=Ax_k+Bu_k,\,k = 0,...,\\
	& \quad |u_k|\leq Ex_k,\,k = 0,...,
\end{align*}
where $s$, $r$, $A$, $B$, and $E$ are given quantities. In particular, $s\in \Re^n$, $r\in \Re^m$, $A$ is an $n$ by $n$ matrix, and $B=[b_1 \dots b_m]$ is an $n$ by $m$ matrix with $b_i$ being column vectors. Subsequently, we also write $B=[B_1'\dots B_n']'$ with $B_i$ being row vectors and the prime denotes transposition. The quantity $E=[E_1' \dots E_m']'$ is an $m$ by $n$ matrix with $E_i$ being row vectors. To ensure the quantities $x_k$, $k=1,\dots$, all stay within $\Re^n_+$, and $s'x_k+r'u_k$, $k=1,\dots$, are nonnegative, it is imposed in \cite{rantzer2022explicit} that the elements of $E$ are nonnegative and
\begin{equation}
    \label{eq:anders22}
    A\geq |B|E,\quad s\geq E'|r|.
\end{equation}
To obtain the desired solution property, we require an additional condition 
$$(s'- |r'|E)\sum_{i=0}^{n-1}(A-|B|E)^i>0.$$
The justification of the conditions will be given in Section~\ref{sec:ab_bound_prob}. In particular, the essential feature that simplifies this problem is that the optimal value of the problem is linear in $x_0$, which was given in \cite{rantzer2022explicit}. We will show that under the conditions introduced above, the optimal cost at $x_0$, denoted by $J^*(x_0)$, is of the form 
$$J^*(x_0)=x_0'p^*,$$ 
where $p^*$ is the unique solution to an associated nonlinear equation, thus enabling various exact solution approaches.  

Similarly, the norm bound problem considers for every $x_0\in \Re^n_+$, the optimization problem
\begin{align*}
	\min_{\{u_k\}_{k=0}^{\infty}}& \quad \sum_{k=0}^{\infty} s'x_k+r'u_k\\
	\mathrm{s.\,t.} & \quad x_{k+1}=Ax_k+Bu_k,\,k = 0,...,\\
	& \quad \Vert u_k\Vert\leq Nx_k,\,k = 0,...,
\end{align*}
where $s$, $r$, $A$, and $B$ are given quantities as in the absolute value bound problem, and $N$ is an $n$ dimensional row vector, which is also given. The problem is well-posed, provided that the elements of $N$ are nonnegative,
$$A\geq [\Vert B_1'\Vert_*\dots \Vert B_n'\Vert_*]'N,\quad s\geq N'\Vert r\Vert_*,$$
and 
$$(s'-\Vert r\Vert_*N)\sum_{i=0}^{n-1}\big(A- [\Vert B_1'\Vert_*\dots \Vert B_n'\Vert_*]'N\big)^i>0.$$
Further discussions on these conditions are given in Section~\ref{sec:norm}. Similar to the absolute value bound case, we will show that under the conditions introduced above, the optimal value is linear in $x_0$, and the linear parameter that defines the optimal values is the unique solution to an associated nonlinear equation. 

In the next section, we will show that these two classes of problems are special forms of the general nonnegative cost problem with a favorable positive linear structure. With this insight, we can develop the exact DP theory for both cases by bringing to bear the related DP theory and a classical result for positive linear systems.

\section{Background}
General deterministic discrete-time optimal control problems involve the system
\begin{equation}
\label{eq:dynamics}
    x_{k+1}=f(x_k,u_k),\quad k=0,\,1,\,\dots,
\end{equation}
where $x_k$ and $u_k$ are state and control at stage $k$, which belong to state and control spaces $X$ and $U$, respectively, and $f$ maps $X\times U$ to $X$. The control $u_k$ must be chosen from a nonempty constraint set $U(x_k)\subset U$ that may depend on $x_k$. The cost for the $k$th stage, denoted $g(x_k,u_k)$, is assumed nonnegative and may possibly take the value infinity:
\begin{equation}
\label{eq:cost_nonneg}
    0\leq g(x_k,u_k)\leq\infty,\quad x_k\in X,\,u_k\in U(x_k).
\end{equation}
By allowing an infinite value of $g(x,u)$ we can implicitly introduce state and control constraints: a pair $(x,u)$ is infeasible if $g(x,u)=\infty$. We are interested in feedback policies of the form $\pi=\{\mu_0,\mu_1,\dots\}$, where $\mu_k$ is a function mapping every $x\in X$ into the control $\mu_k(x)\in U(x)$. The set of all policies is denoted as $\Pi$. Policies of the form $\pi=\{\mu,\mu,\dots\}$ are called \emph{stationary}, and for convenience, when confusion cannot arise, will be denoted as $\mu$.

Given an initial state $x_0$, a policy $\pi=\{\mu_0,\mu_1,\dots\}$ when applied to system \eqref{eq:dynamics}, generates a unique sequence of state control pairs $\big(x_k,\mu_k(x_k)\big),$ $k=0,1,\dots$, with cost 
$$J_{\pi}(x_0)=\lim_{k\to\infty}\sum_{t=0}^kg\big(x_t,\mu_t(x_t)\big),\quad  x_0\in X.$$
We view $J_\pi$ as a function over $X$ that takes values in $[0,\infty]$. We refer to it as the cost function of $\pi$. For a stationary policy $\mu$, the corresponding cost function is denoted by $J_\mu$. The optimal cost function is defined as
$$J^*(x)=\inf_{\pi\in\Pi}J_{\pi}(x),\quad x\in X,$$
and a policy $\pi^*$ is said to be optimal if it attains the minimum of $J_\pi(x)$ for all $x\in X$, i.e., 
$$J_{\pi^*}(x)=\inf_{\pi\in \Pi}J_\pi(x)=J^*(x),\quad \forall x\in X.$$

In the context of DP, one hopes to prove that the optimal cost function $J^*$ satisfies Bellman's equation:
\begin{equation}
    \label{eq:bellman}
    J^*(x)=\inf_{u\in U(x)}\big\{g(x,u)+J^*\big(f(x,u)\big)\big\},\quad \forall x\in X,
\end{equation}
and that a stationary optimal policy exists. The classical solution algorithms for general optimal control problems include VI and PI, both traced back to Bellman \cite[Section~12]{bellman1954theory}, \cite[p.~88]{bellman1957dynamic}. For the nonnegative cost problem, the VI starts with some nonnegative function $J_0:X\mapsto[0,\infty]$, and generates a sequence of functions $\{J_k\}$ according to 
\begin{equation}
    \label{eq:vi}
    J_{k+1}(x)=\inf_{u\in U(x)}\big\{g(x,u)+J_k\big(f(x,u)\big)\big\},\quad \forall x\in X.
\end{equation}

The PI algorithm starts from a stationary policy $\mu^0$, and generates a sequence of stationary policies $\{\mu^k\}$ via a sequence of policy evaluations to obtain $J_{\mu^k}$ from the equation 
\begin{equation}
    \label{eq:policy_ev}
    J_{\mu^k}(x)=g\big(x,\mu^k(x)\big)+J_{\mu^k}\big(f\big(x,\mu^k(x)\big)\big),\quad x\in X,
\end{equation}
interleaved with policy improvements to obtain $\mu^{k+1}$ from $J_{\mu^k}$ via 
\begin{equation}
    \label{eq:policy_im}
    \mu^{k+1}(x)\in \arg\min_{u\in U(x)}\big\{g(x,u)+J_{\mu^k}\big(f(x,u)\big)\big\},\quad x\in X,
\end{equation}
where we have assumed that the relevant minimums can be attained.

There are some classical results that hold for this class of problems. Note that the convergence of sequence, and equalities/inequalities between functions are meant pointwise.

\begin{proposition}\label{prop:classical}
Let the cost nonnegativity condition \eqref{eq:cost_nonneg} hold.
\begin{itemize}
    \item[(a)] $J^*$ satisfies Bellman's equation \eqref{eq:bellman}, and if $\hat{J}:X\mapsto[0,\infty]$ satisfies
    $$\inf_{u\in U(x)}\big\{g(x,u)+\hat{J}\big(f(x,u)\big)\big\}\leq \hat{J}(x),\quad \forall x\in X,$$
    then $J^*\leq \hat{J}$.
    \item[(b)] For all stationary policies $\mu$ we have 
    $$J_{\mu}(x)=g\big(x,\mu(x)\big)+J_{\mu}\Big(f\big(x,\mu(x)\big)\Big),\quad \forall x\in X.$$
    Moreover, if $\hat{J}:X\mapsto[0,\infty]$ satisfies
    $$g\big(x,\mu(x)\big)+\hat{J}\Big(f\big(x,\mu(x)\big)\Big)\leq \hat{J}(x),\quad \forall x\in X,$$
    then $J_\mu\leq \hat{J}$.
    \item[(c)] A stationary policy $\mu^*$ is optimal if and only if 
    $$\mu^*(x)\in\arg\min_{u\in U(x)}\big\{g(x,u)+J^*\big(f(x,u)\big)\big\},\quad \forall x\in X.$$
    \item[(d)] If a function $J$ satisfies 
    $$J^*\leq J\leq cJ^*$$
    for some scalar $c>1$, then the sequence $\{J_k\}$ generated by VI with $J_0=J$ converges to $J^*$, i.e., $J_k\to J^*$.
\end{itemize}
\end{proposition}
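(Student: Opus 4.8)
The plan is to recast all four parts in terms of the Bellman operator $T$ and the policy operators $T_\mu$, defined on functions $J:X\mapsto[0,\infty]$ by $(TJ)(x)=\inf_{u\in U(x)}\{g(x,u)+J(f(x,u))\}$ and $(T_\mu J)(x)=g(x,\mu(x))+J(f(x,\mu(x)))$. Two structural facts drive everything. First, both operators are \emph{monotone}: if $J\le J'$ then $TJ\le TJ'$ and $T_\mu J\le T_\mu J'$. Second, because the stage costs are nonnegative, the finite-horizon costs $(T_{\mu_0}\cdots T_{\mu_{N-1}}\bar J)(x_0)=\sum_{t=0}^{N-1}g(x_t,\mu_t(x_t))$ with $\bar J\equiv 0$ increase monotonically to $J_\pi(x_0)$, so every limit I take lives safely in $[0,\infty]$. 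These two facts, together with $\epsilon$-optimal control selections, are all that the argument uses.

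I would prove (b) first, since it fixes a single policy. The equation $J_\mu=T_\mu J_\mu$ follows by splitting the nonnegative series defining $J_\mu(x_0)$ into its first term plus the tail, the tail being exactly $J_\mu$ evaluated at $f(x_0,\mu(x_0))$. For the domination claim, I would iterate the hypothesis $T_\mu\hat J\le\hat J$ along the trajectory generated by $\mu$ to obtain $\hat J(x_0)\ge\sum_{t=0}^{N-1}g(x_t,\mu(x_t))+\hat J(x_N)$, then discard the nonnegative term $\hat J(x_N)$ and let $N\to\infty$ to conclude $\hat J\ge J_\mu$; here monotonicity supplies the iteration and nonnegativity supplies the discard. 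For (a), the inequality $J^*\ge TJ^*$ comes from writing any $J_\pi$ as $g(x,\mu_0(x))+J_{\pi_1}(f(x,\mu_0(x)))$ with $\pi_1$ the shifted policy, bounding $J_{\pi_1}\ge J^*$, and taking the infimum over $\pi$; the reverse $J^*\le TJ^*$ comes from concatenating a near-minimizing control at $x$ with an $\epsilon$-optimal policy from the successor state, giving $J^*(x)\le(TJ^*)(x)+2\epsilon$. The minimal-supersolution statement is the delicate part of (a): given $T\hat J\le\hat J$, I would pick, for a summable sequence $\epsilon_k$, selectors $\mu_k$ with $g(x,\mu_k(x))+\hat J(f(x,\mu_k(x)))\le\hat J(x)+\epsilon_k$, telescope along the resulting trajectory to get $\sum_{t=0}^{N-1}g(x_t,\mu_t(x_t))\le\hat J(x_0)+\sum_k\epsilon_k$, and send $N\to\infty$ and $\sum_k\epsilon_k\to0$ to force $J^*\le\hat J$.

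Part (c) is then immediate from (a) and (b). If $\mu^*$ is greedy with respect to $J^*$, then $T_{\mu^*}J^*=TJ^*=J^*$, so (b) gives $J_{\mu^*}\le J^*$, while $J_{\mu^*}\ge J^*$ holds by definition of $J^*$, whence $J_{\mu^*}=J^*$ and $\mu^*$ is optimal. Conversely, optimality together with (b) gives $T_{\mu^*}J^*=T_{\mu^*}J_{\mu^*}=J_{\mu^*}=J^*=TJ^*$, which says precisely that $\mu^*(x)$ attains the minimum in Bellman's equation.

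I expect (d) to be the main obstacle. The lower bound is routine: $J\ge J^*$ and monotonicity give $J_k=T^kJ\ge T^kJ^*=J^*$. The work is the matching upper bound, for which I would use $J_k=T^kJ\le T^k(cJ^*)$ and show $T^k(cJ^*)\to J^*$ pointwise. Fixing $x_0$ with $J^*(x_0)<\infty$ and an $\epsilon$-optimal policy $\pi$, I would bound $T^k(cJ^*)(x_0)\le(T_{\mu_0}\cdots T_{\mu_{k-1}}(cJ^*))(x_0)=\sum_{t=0}^{k-1}g(x_t,\mu_t(x_t))+cJ^*(x_k)$, using $TW\le T_\mu W$ and monotone composition. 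The nonlinearity of $T$ causes no difficulty in this product form because the factor $c$ multiplies only the terminal term $cJ^*(x_k)$. Since the path costs converge to $J_\pi(x_0)\le J^*(x_0)+\epsilon$ and the tail optimal cost obeys $J^*(x_k)\le\sum_{t=k}^{\infty}g(x_t,\mu_t(x_t))\to0$ (the vanishing tail of a convergent series), I obtain $\limsup_k T^k(cJ^*)(x_0)\le J^*(x_0)+\epsilon$, and then let $\epsilon\to0$; states with $J^*(x_0)=\infty$ are trivial since $J_k\ge J^*$. The subtle points to get right are the possible nonattainment of the infimum (handled throughout by $\epsilon$-optimal and summable-$\epsilon$ selectors) and the survival of the factor $c$ only on the terminal term, which vanishes precisely because optimal tail costs decay; this last step is where both nonnegativity and finiteness of $J^*$ are essential.
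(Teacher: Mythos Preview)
Your proposal is correct and self-contained. Note, however, that the paper does not actually prove this proposition: the text immediately following the statement attributes parts (a)--(c) to \cite[Props.~5.2 and 5.4]{bertsekas1978stochastic} and part (d) to \cite[Theorem~5.1]{yu2015mixed}, treating the result as classical background. So there is no ``paper's own proof'' to compare against in the strict sense.

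That said, your argument is essentially the standard one found in those references. Parts (a)--(c) follow the monotone-operator development of Bertsekas: monotonicity of $T$ and $T_\mu$, the telescoping trajectory bound $\hat J(x_0)\ge\sum_{t<N}g(x_t,\mu_t(x_t))+\hat J(x_N)$, and the $\epsilon$-selector device for the minimal-supersolution direction. Your part (d) is precisely the Yu--Bertsekas argument: sandwich $J_k$ between $J^*$ and $T^k(cJ^*)$, bound the latter along an $\epsilon$-optimal trajectory by $\sum_{t<k}g(x_t,\mu_t(x_t))+cJ^*(x_k)$, and use that $J^*(x_k)$ is dominated by the vanishing tail $\sum_{t\ge k}g(x_t,\mu_t(x_t))$ of a finite series. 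The only point worth tightening in a write-up is to make explicit that along the trajectory in the supersolution argument one has $\hat J(x_k)<\infty$ whenever $\hat J(x_0)<\infty$ (which follows inductively from $g\ge0$), so the $\epsilon_k$-selectors are always available; the case $\hat J(x_0)=\infty$ is trivial.
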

The second half of Prop.~\ref{prop:classical}(a) is first given in \cite[Prop.~5.2]{bertsekas1978stochastic}, which may be viewed as the discrete-time counterpart of the control Lyapunov function introduced in \cite{sontag1983lyapunov} without additional conditions on the stage cost. Prop.~\ref{prop:classical}(b) is a special case of (a). Prop.~\ref{prop:classical}(c) is given as \cite[Prop.~5.4]{bertsekas1978stochastic}, and Prop.~\ref{prop:classical}(d) is proved as \cite[Theorem~5.1]{yu2015mixed} within the context of stochastic problems with additive costs, where the associated measurability issue is also addressed. Note that those results can be extended to problems far beyond the scope considered here, including stochastic problems with additive costs [valid after suitable modifications of Prop.~\ref{prop:classical}(a)-(d)], stochastic problems with multiplicative costs [for Prop.~\ref{prop:classical}(a)-(c)], and minimax control [for Prop.~\ref{prop:classical}(a)-(c)].

Having the general nonnegative cost problems in mind, one can see that the problems formulated in Section~\ref{sec:formulation} are their special cases. Through the lens of DP theory, both classes of problems involve a common form of system dynamics and stage cost. The difference lies in the form of control constraint sets. The common system dynamics are
\begin{equation}
    \label{eq:dp_dynamics}
    f(x,u)=Ax+Bu,
\end{equation}
and the stage cost is given as
\begin{equation}
    \label{eq:dp_cost}
    g(x,u)=s'x+r'u.
\end{equation}
In the absolute value bound problem, the control constraint set $U(x)$ is given as
\begin{equation}
    \label{eq:control_ab_v_b}
    U(x)=\{u\in \Re^m\,|\,|u|\leq Ex\},
\end{equation}
while the control constraint set $U(x)$ is given as
\begin{equation}
    \label{eq:control_norm_b}
    U(x)=\{u\in \Re^m\,|\,\Vert u\Vert\leq Nx\},
\end{equation}
in the norm bound problem.

For the absolute value bound problems, \cite{rantzer2022explicit} introduced suitable conditions [$E$ being nonnegative and the first inequality in \eqref{eq:anders22} being true] to make the states remain within $\Re^n_+$. Systems of this form are commonly referred to as `positive linear systems.' For the study of positive linear systems, the single most important result is known as the Frobenius-Perron theorem. It admits several variants and the following form is given as \cite[Theorem 3, Section 6.2]{luenberger1979introduction}, within the context of positive linear systems.
\begin{proposition}\label{prop:f_p}
    Let $M$ be an $n$ by $n$ matrix such that $M\geq 0$. Then there exists some scalar $\lambda\geq0$ and some nonzero vector $x\in \Re_+^n$ such that $Mx=\lambda x$ and for all other eigenvalues $\rho$ of $M$, $|\rho|\leq \lambda$.
\end{proposition}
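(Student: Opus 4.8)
The plan is to prove the result first for strictly positive matrices by a fixed-point argument, and then recover the general nonnegative case through a perturbation-and-limit construction, relying on the continuity of the eigenvalues in the matrix entries to identify the limiting eigenvalue with the spectral radius.

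First I would treat the case $M>0$. Let $\Delta=\{x\in\Re^n_+\,|\,1'x=1\}$ denote the unit simplex and consider the map $T(x)=Mx/(1'Mx)$. For every $x\in\Delta$ we have $x\geq0$ and $x\neq0$, so $Mx>0$ and the scalar $1'Mx>0$; hence $T$ is a continuous self-map of the compact convex set $\Delta$, and Brouwer's fixed-point theorem furnishes $x^*\in\Delta$ with $Mx^*=\lambda x^*$, where $\lambda=1'Mx^*>0$. Applying the same construction to $M'$ produces a positive left eigenvector $w>0$ with $w'M=\sigma w'$, and the two eigenvalues agree since $\lambda\,w'x^*=w'Mx^*=\sigma\,w'x^*$ with $w'x^*>0$, giving $\sigma=\lambda$. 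To see that $\lambda$ dominates every other eigenvalue in modulus, take any eigenvalue $\rho$ with eigenvector $v$; applying absolute values componentwise to $Mv=\rho v$ and using $M\geq0$ gives $|\rho|\,|v|\leq M|v|$, and left-multiplication by $w'>0$ yields $|\rho|\,w'|v|\leq w'M|v|=\lambda\,w'|v|$. Since $v\neq0$ forces $w'|v|>0$, we get $|\rho|\leq\lambda$, so $\lambda=\rho(M)$ and the proposition holds whenever $M>0$.

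To pass to a general $M\geq0$, I would set $M_\epsilon=M+\epsilon C$ for $\epsilon>0$, where $C$ is the $n$ by $n$ all-ones matrix, so that $M_\epsilon>0$. The previous step supplies $x_\epsilon\in\Delta$ and $\lambda_\epsilon=\rho(M_\epsilon)>0$ with $M_\epsilon x_\epsilon=\lambda_\epsilon x_\epsilon$. Since $\lambda_\epsilon=1'M_\epsilon x_\epsilon$ remains bounded as $\epsilon\downarrow0$ and $\Delta$ is compact, I would extract a sequence $\epsilon_k\to0$ along which $x_{\epsilon_k}\to x^*\in\Delta$ and $\lambda_{\epsilon_k}\to\lambda^*\geq0$. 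Passing to the limit in $M_{\epsilon_k}x_{\epsilon_k}=\lambda_{\epsilon_k}x_{\epsilon_k}$ gives $Mx^*=\lambda^*x^*$ with $x^*\in\Re^n_+$ nonzero (since $1'x^*=1$) and $\lambda^*\geq0$. Finally, since the eigenvalues of a matrix are the roots of its characteristic polynomial and thus depend continuously on the matrix entries, the spectral radius is continuous, so $\lambda_{\epsilon_k}=\rho(M_{\epsilon_k})\to\rho(M)$; combined with $\lambda_{\epsilon_k}\to\lambda^*$ this forces $\lambda^*=\rho(M)$, which by definition satisfies $\lambda^*\geq|\rho|$ for every eigenvalue $\rho$.

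I expect the main obstacle to be the reducible nonnegative case. When $M$ is merely nonnegative, the fixed-point map $T$ can degenerate because $Mx$ may vanish for some $x\in\Delta$, so the direct Brouwer argument is available only for $M>0$; the perturbation-and-limit step circumvents this, but it is then essential to invoke continuity of the eigenvalues to recover the spectral-radius characterization in the limit, since the clean left-eigenvector bound is no longer directly available once strict positivity is lost.
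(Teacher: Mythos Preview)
The paper does not supply its own proof of this proposition: it is quoted as a known background result, with the explicit attribution ``the following form is given as [Theorem~3, Section~6.2]\{luenberger1979introduction\}.'' There is therefore no in-paper argument to compare against.

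That said, your proposed proof is correct. The Brouwer fixed-point argument on the simplex for the strictly positive case, followed by the left-eigenvector domination inequality $|\rho|\,w'|v|\leq w'M|v|=\lambda\,w'|v|$, is the standard route to the Perron theorem for $M>0$. The perturbation step $M_\epsilon=M+\epsilon C$ and the limit extraction are also the usual way to descend to the merely nonnegative (possibly reducible) case; your appeal to continuity of the roots of the characteristic polynomial to identify $\lambda^*$ with $\rho(M)$ is legitimate and is exactly the point where care is needed, since the left-eigenvector bound is no longer directly available after the limit. One small remark: you can avoid invoking continuity of the spectral radius as a black box and argue instead that for any eigenvalue $\rho$ of $M$ there exist eigenvalues $\rho_{\epsilon_k}$ of $M_{\epsilon_k}$ with $\rho_{\epsilon_k}\to\rho$, whence $|\rho|=\lim|\rho_{\epsilon_k}|\leq\lim\lambda_{\epsilon_k}=\lambda^*$; this uses only the continuity of the roots, which you already cite.
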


In the following sections, we will first develop in detail the DP theory for absolute value bound problem. This is followed by the DP theory for the norm bound problem. The proof of the results will be provided in the subsequent section.

\section{Dynamic Programming for the Absolute Value Bound Problem}\label{sec:ab_bound_prob}
For the absolute value bound problem, we take the perspective of the general DP theory and summarize the related conditions discussed in Section~\ref{sec:formulation} as the following assumption, which involves a correction of the conditions given in \cite[Theorem~1]{rantzer2022explicit}. We will show through examples (Examples~\ref{eg:bellman_nonunique} and \ref{eg:scalar}) that the additional condition, compared with those stated in \cite[Theorem~1]{rantzer2022explicit}, is necessary in order to ensure the uniqueness of the solution to Bellman's equation. \emph{The assumption remains valid throughout this section and thus is omitted in the subsequent theoretical statements.} Recall that inequalities between vectors and matrices mean elementwise comparison.
\begin{assumption}\label{asm:1}
The system dynamics and the stage cost are given as \eqref{eq:dp_dynamics} and \eqref{eq:dp_cost}, respectively. The control constraint sets $U(x)$ take the form \eqref{eq:control_ab_v_b}, where the elements of $E$ are nonnegative. Moreover,
\begin{equation}
    \label{eq:asum}
    A\geq |B|E,\quad s\geq E'|r|,
\end{equation}
and 
\begin{equation}
    \label{eq:asum_ob}
    (s'- |r'|E)\sum_{i=0}^{n-1}(A-|B|E)^i>0.
\end{equation}
\end{assumption}

The direct consequences of the assumption are as follows:
\begin{itemize}
    \item[(1)] The positive orthant of $\Re^n$ can be used as the state space $X$ and the problem is well-posed in the sense that
    $$f(x,u)\in X,\quad \forall x\in X,\,u\in U(x).$$
    \item[(2)] The stage cost $g(x,u)$ is nonnegative for all $x\in X$. In particular, $g(0,0)=0$. The inequality \eqref{eq:asum_ob} is the \emph{observability} condition, ensuring states other than zero have a positive cost under arbitrary policies.\footnote{A further discussion on this property is peripheral to the main point of the following development and is thus deferred toward the appendix.}
\end{itemize}
Therefore, the problem studied here falls into the general nonnegative cost problem, and the classical results given in Prop.~\ref{prop:classical} can be brought to bear.

Now we are ready to state the main results for this class of problems. Their proofs are deferred until Section~\ref{sec:proof}.

\begin{proposition}
    \label{prop:unique}
    The following two statements are equivalent:
    \begin{itemize}
    \item[(i)] $J^*(x)$ is finite for all $x\in X$.
    \item[(ii)] There exists some $p\in \Re^n_+$ such that 
    \begin{equation}
        \label{eq:bellman_pos}
        p=s+A'p-E'|r+B'p|.
    \end{equation}
    Moreover, the solution $p^*$ is unique within $\Re^n_+$ and $J^*(x)=x'p^*$ for all $x\in X$.
\end{itemize}
\end{proposition}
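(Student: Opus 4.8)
The plan is to recast Bellman's equation \eqref{eq:bellman} under the linear ansatz $J(x)=x'p$ into the finite-dimensional equation \eqref{eq:bellman_pos}, and then run the entire argument on the vector $p$. Since $U(x)=\{u:|u|\le Ex\}$ is a box and $Ex\ge0$, the inner minimization $\min_{|u|\le Ex}(r+B'p)'u$ is attained componentwise at $u_i=-\operatorname{sign}\big((r+B'p)_i\big)(Ex)_i$, so for $p\in\Re^n_+$ the induced map is $Tp:=s+A'p-E'|r+B'p|$, with minimizing gain $F_p:=-\operatorname{diag}\big(\operatorname{sign}(r+B'p)\big)E$. Writing $T_Fp:=s+F'r+(A+BF)'p$ for an admissible linear policy $u=Fx$ (those with $|F|\le E$), one has the pointwise identity $Tp=\min_{|F|\le E}T_Fp=T_{F_p}p$. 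The implication (ii)$\Rightarrow$(i) is then immediate: if $p\in\Re^n_+$ solves \eqref{eq:bellman_pos}, then $\hat J(x)=x'p$ satisfies Bellman's equation with equality, hence the hypothesis of Prop.~\ref{prop:classical}(a), giving $J^*\le\hat J<\infty$; the same inequality also records that $J^*(x)\le x'p$ for \emph{every} nonnegative solution $p$.

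For (i)$\Rightarrow$(ii) I would run value iteration from $J_0\equiv0$. Using $|r+B'p|\le|r|+|B|'p$ together with \eqref{eq:asum}, one checks $Tp\ge c+M'p$ for $p\ge0$, where $c:=s-E'|r|\ge0$ and $M:=A-|B|E\ge0$; hence by induction every iterate is linear, $J_k(x)=x'p_k$ with $p_k\in\Re^n_+$ and $p_{k+1}=Tp_k$. Value iteration from $0$ is monotone nondecreasing (as $J_1(x)=x'c\ge0=J_0$), which gives $p_k\le p_{k+1}$; and the finite-horizon cost obeys $J_k\le J^*$, so evaluating $x'p_k\le J^*(x)$ at $x=e_i$ and invoking finiteness (i) bounds $\{p_k\}$ from above. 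A bounded nondecreasing sequence converges, so $p_k\uparrow p^*\in\Re^n_+$, and continuity of $T$ passes the fixed-point equation to the limit, so $p^*$ solves \eqref{eq:bellman_pos}. Sandwiching $x'p^*=\lim x'p_k\le J^*(x)$ against $J^*\le x'p^*$ (Prop.~\ref{prop:classical}(a) applied to $\hat J=x'p^*$) establishes $J^*(x)=x'p^*$.

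For uniqueness, let $p\in\Re^n_+$ be any solution of \eqref{eq:bellman_pos}. The direction (ii)$\Rightarrow$(i) gives $J^*\le x'p$, so $\delta:=p-p^*\ge0$. Let $F^*:=F_{p^*}$ and $A_{\mathrm{cl}}:=A+BF^*$; then $p^*=T_{F^*}p^*$ while $p=Tp\le T_{F^*}p$, and subtracting these two yields the comparison $\delta\le A_{\mathrm{cl}}'\delta$. The plan is to show $A_{\mathrm{cl}}$ is stable, that is $\rho(A_{\mathrm{cl}})<1$; iterating the comparison and using $A_{\mathrm{cl}}'\ge0$ then gives $0\le\delta\le(A_{\mathrm{cl}}')^k\delta\to0$, forcing $\delta=0$ and hence $p=p^*$.

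Proving $\rho(A_{\mathrm{cl}})<1$ is the main obstacle, and it is precisely where the observability condition \eqref{eq:asum_ob} and the Frobenius--Perron theorem (Prop.~\ref{prop:f_p}) enter. Because $|BF^*|\le|B|E$, one has $A_{\mathrm{cl}}\ge M\ge0$, hence $A_{\mathrm{cl}}^i\ge M^i$ for all $i$. The policy-evaluation identity $p^*=\bar s+A_{\mathrm{cl}}'p^*$ with $\bar s:=s+(F^*)'r\ge c$ (using $E\ge0$) iterates to partial sums $\sum_{i=0}^{k-1}(A_{\mathrm{cl}}')^i\bar s\le p^*$, which are bounded and nondecreasing, so $\sum_{i=0}^{\infty}(A_{\mathrm{cl}}')^i\bar s$ converges and, since $0\le c\le\bar s$, so does $\sum_{i=0}^{\infty}(A_{\mathrm{cl}}')^ic$. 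Let $\lambda=\rho(A_{\mathrm{cl}})$ and let $w\in\Re^n_+$, $w\ne0$, be a Frobenius--Perron eigenvector, $A_{\mathrm{cl}}w=\lambda w$. Then $c'\sum_{i=0}^{n-1}A_{\mathrm{cl}}^iw\ge c'\sum_{i=0}^{n-1}M^iw>0$ by \eqref{eq:asum_ob} (a strictly positive row vector against a nonzero $w\ge0$), while the left side equals $(c'w)\sum_{i=0}^{n-1}\lambda^i$ with $\sum_{i=0}^{n-1}\lambda^i\ge1$, forcing $c'w>0$. Finally, pairing the convergent series with $w$ gives $\sum_{i=0}^{\infty}w'(A_{\mathrm{cl}}')^ic=(c'w)\sum_{i=0}^{\infty}\lambda^i<\infty$, which together with $c'w>0$ forces $\lambda<1$. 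This closes the uniqueness argument, and it also explains the referenced examples: dropping \eqref{eq:asum_ob} breaks exactly the step $c'w>0$ and can permit a second nonnegative solution.
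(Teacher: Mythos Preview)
Your existence argument and the identification $J^*(x)=x'p^*$ match the paper's proof essentially line for line: both run VI from $p_0=0$, use \eqref{eq:asum} to get $p_1\ge0$, monotonicity of $T$ (the paper's $G$) to get $p_k\uparrow$, finiteness of $J^*$ to bound the iterates, continuity to pass the fixed point to the limit, and Prop.~\ref{prop:classical}(a) for the sandwich. The implication (ii)$\Rightarrow$(i) is likewise identical.

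Where you genuinely diverge is in the uniqueness step. The paper first extracts from \eqref{eq:asum_ob} that $p^*\ge p_n>0$ (strictly), then for any second solution $\bar p\ge p^*$ chooses $c>1$ with $\bar p\le cp^*$ and invokes Prop.~\ref{prop:classical}(d): VI started from $cp^*$ converges down to $p^*$, and by monotonicity it dominates the constant sequence $\bar p_k\equiv\bar p$, forcing $\bar p\le p^*$. In contrast, you prove Schur stability of $A_{\mathrm{cl}}=A+BF^*$ \emph{inside} the uniqueness argument---pairing the convergent policy-evaluation series with a Frobenius--Perron eigenvector $w$ and using \eqref{eq:asum_ob} to guarantee $c'w>0$---and then conclude via $0\le\delta\le(A_{\mathrm{cl}}')^k\delta\to0$. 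Both are correct. Your route is self-contained and more explicitly linear-algebraic; it in fact pre-establishes the stability claim that the paper postpones to Prop.~\ref{prop:optimal_policy} (and your eigenvector pairing is arguably cleaner than the paper's $n$-block regrouping of the cost series there). The paper's route, on the other hand, is more modular: it isolates the single consequence $p^*>0$ of observability and then defers to the abstract VI result Prop.~\ref{prop:classical}(d), keeping the stability analysis separate. Either way, observability \eqref{eq:asum_ob} is the indispensable ingredient, entering in your proof at the step $c'w>0$ and in the paper's at $p_n>0$.
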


Note that the strict inequality in \eqref{eq:asum_ob} is needed to avoid some pathologies in Bellman's equation \eqref{eq:bellman_pos}. The following is an example where the uniqueness of the solution does not hold due to the equality $s=E'|r|$.

\begin{example}\label{eg:bellman_nonunique}
    Let $r=|r|$. Suppose further that $A-BE=I$ with $I$ being the identity matrix of suitable dimension, and $s=E'r$. This is true if we set $m=n$, $A=2I$, $B=E=I$, and $s=r$. As a result, one can verify that $J^*(x)=0$ for all $x$ so that we may write $J^*(x)=x'p^*$ with $p^*=[0\;0]'$. To see this, consider the policy $\mu(x)=Lx$ with $L=-I$ so that $|\mu(x)|=|x|\leq E'x$. The stage cost under this policy is $g\big(x,\mu(x)\big)=s'x-r'x=0$ for all $x\in X$ so that $J_\mu=J^*$. However, for $p\in \Re^n_+$, the right-hand side of equation \eqref{eq:bellman_pos} takes the form 
    $$s+A'p-E'|r+B'p|=s+2p-r-p=p.$$
    As a result, every $p\in \Re^n_+$ fulfills the equation \eqref{eq:bellman_pos} and its solution is not unique. Still, $p^*$ is the smallest solution among them within $\Re^n_+$, which is consistent with Prop.~\ref{prop:classical}(a). However, the absence of uniqueness of the solution renders major difficulties for the algorithms developed later.
\end{example}

On the other hand, under Assumption~\ref{asm:1}, the pathology discussed above can be avoided.

\begin{example}\label{eg:scalar}
Consider a scalar system with $A=1$, $B=0.5$, $s=1.5$, $r=-1$, and $E=1$. Then Assumption~\ref{asm:1} holds. The plot of Bellman's equation is given in Fig.~\ref{eg:scalar}.

\begin{figure}[ht]
    \centering
    \includegraphics[width=0.9\linewidth]{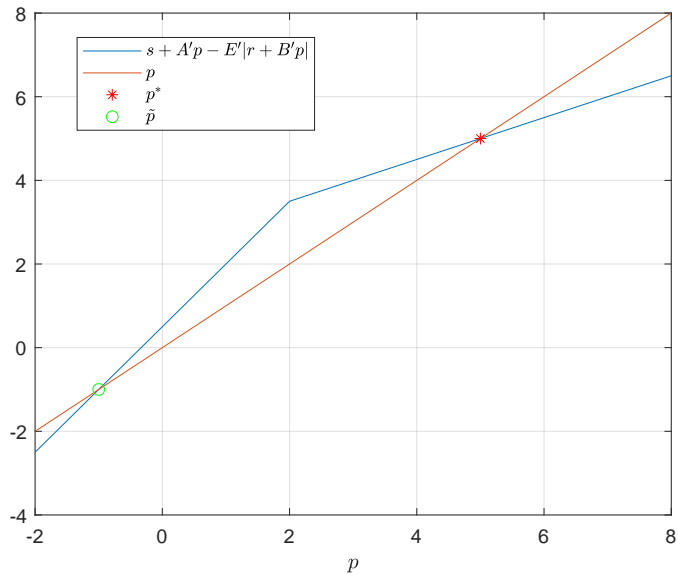}
    \caption{The Bellman curve and the solutions to the Bellman's equation for Example~\ref{eg:scalar}. Note that $p^*$ is the only nonnegative solution, while the other solution, denoted as $\Tilde{p}$, is negative.}
    \label{fig:scalar}
\end{figure}
\end{example}

When $p^*$ is finite, in view of Prop.~\ref{prop:classical}(c) and Prop.~\ref{prop:unique}, we have a stationary linear optimal policy $\mu^*(x)=\Bar{L}^*x$ with
\begin{equation}
    \label{eq:optimal_policy_bar}
    \Bar{L}^*=-\begin{bmatrix}
\text{sign}(r_1+b_1'p^*)E_1\\
\vdots\\
\text{sign}(r_m+b_m'p^*)E_m
\end{bmatrix},
\end{equation}
where $\text{sign}:\Re\mapsto\{-1,1\}$ takes the value $1$ if the argument is nonnegative and $-1$ otherwise, $r_i$ is the $i$th element of $r$, and recall that $E_i$ is the $i$th row of $E$. In fact, there exists a set of stationary optimal policies that are linear, and their linear parameters form the set $\mathcal{L}_e^*$, which is defined as
\begin{equation}
    \label{eq:optimal_policy_set}
    \mathcal{L}_e^*=\bigg\{L\in \Re^{m\times n}\,\Big|\,|L|\leq E,\;\sum_{i=1}^n(r_i+b_i'p^*)\Vert L_i-\Bar{L}^*_i\Vert=0\bigg\},
\end{equation}
where $L_i$ and $L^*_i$ are $i$th rows of $L$ and $\Bar{L}^*$, respectively. Besides, the positivity of the system states allows further characterization of those policies.
For an $n$ by $n$ matrix $M$, we say it is \emph{(Schur) stable} if its eigenvalues are strictly within the unit circle. Now we are ready to characterize the optimal policy.
\begin{proposition}
    \label{prop:optimal_policy}
    Suppose that $J^*(x)$ is finite for all $x\in X$. Then the set $\mathcal{L}_e^*$ given in \eqref{eq:optimal_policy_set} is well-defined and nonempty. Moreover, for every $L^*\in \mathcal{L}_e^*$, the stationary linear policy $L^*x$ is optimal, i.e., $\mu^*(x)=L^*x$, and $(A+BL^*)$ is stable. 
\end{proposition}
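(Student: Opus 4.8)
The plan is to treat the three assertions in sequence, drawing on Prop.~\ref{prop:unique} for the linear form $J^*(x)=x'p^*$ with $p^*\in\Re^n_+$, on Prop.~\ref{prop:classical}(b)--(c) for optimality and policy evaluation, and on Prop.~\ref{prop:f_p} together with \eqref{eq:asum_ob} for stability. Well-definedness and nonemptiness are immediate: finiteness of $J^*$ yields $p^*$ via Prop.~\ref{prop:unique}, so $\bar{L}^*$ in \eqref{eq:optimal_policy_bar} and hence $\mathcal{L}_e^*$ are well-defined, and since $|\bar{L}^*|=E$ and the defining sum vanishes at $L=\bar{L}^*$, we have $\bar{L}^*\in\mathcal{L}_e^*$. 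For optimality, by Prop.~\ref{prop:classical}(c) it suffices that $L^*x$ attain $\min_{|u|\le Ex}\{g(x,u)+J^*(Ax+Bu)\}$ for each $x\in X$; substituting $J^*(y)=y'p^*$ reduces the bracket to $s'x+x'A'p^*+(r+B'p^*)'u$, whose minimization over the box $|u|\le Ex$ decouples componentwise and is attained exactly when $u_i=-\text{sign}(r_i+b_i'p^*)E_ix$ for every $i$ with $r_i+b_i'p^*\ne0$. I would then unwind the defining condition of $\mathcal{L}_e^*$ to show it forces $L^*_i=\bar{L}^*_i$ precisely at those indices, so that $L^*x$ realizes the componentwise minimizer and is therefore optimal.

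For stability, write $\bar{A}:=A+BL^*$ and $\hat{s}:=s+(L^*)'r$, so that $g(x,L^*x)=\hat{s}'x$. I would first record the structural facts needed. (i) $\bar{A}\ge0$: from $|L^*|\le E$ and $A\ge|B|E$ one gets $A+BL^*\ge A-|B|\,|L^*|\ge A-|B|E\ge0$. (ii) The policy-evaluation identity of Prop.~\ref{prop:classical}(b) applied to the optimal $L^*$, combined with Bellman's equation \eqref{eq:bellman_pos}, gives $p^*=\hat{s}+\bar{A}'p^*$. I would also note the stage-cost bound $\hat{s}'x=g(x,L^*x)\ge(s'-|r'|E)x\ge0$ for $x\in X$, using $|L^*x|\le Ex$ and $s\ge E'|r|$; the same chain shows, as a byproduct, that $p^*>0$.

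The heart of the argument is to bound the spectral radius of $\bar{A}$. By Prop.~\ref{prop:f_p} applied to $\bar{A}\ge0$, there are $\lambda\ge0$ and $v\in\Re^n_+$, $v\ne0$, with $\bar{A}v=\lambda v$ and $\lambda$ equal to the spectral radius. I would argue $\lambda<1$ by contradiction. Propagating the closed loop from $x_0=v$ gives $x_k=\lambda^kv$ and $J_{L^*}(v)=(\hat{s}'v)\sum_{k\ge0}\lambda^k$; since $L^*$ is optimal, $J_{L^*}(v)=v'p^*<\infty$, so if $\lambda\ge1$ then necessarily $\hat{s}'v=0$ and every stage cost vanishes, giving $0=\hat{s}'x_k\ge(s'-|r'|E)x_k\ge0$, i.e. $(s'-|r'|E)x_k=0$. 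Combining with the lower bound $x_k\ge(A-|B|E)^kv$, valid because $\bar{A}\ge A-|B|E\ge0$, yields $(s'-|r'|E)(A-|B|E)^kv=0$ for $k=0,\dots,n-1$, hence $(s'-|r'|E)\sum_{k=0}^{n-1}(A-|B|E)^kv=0$. This contradicts \eqref{eq:asum_ob}, because that row vector is strictly positive while $v\ge0$, $v\ne0$. Therefore $\lambda<1$ and $\bar{A}$ is stable.

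I expect the last step---linking the single Frobenius--Perron eigenpair to the $n$-step observability inequality---to be the main obstacle. The delicate points are that a spectral radius $\ge1$ must be converted into the vanishing of the entire cost stream along the eigentrajectory (using finiteness of $J^*$), and that this per-stage information must then be aggregated into the cumulative observability sum. Both the elementwise propagation $x_k\ge(A-|B|E)^kv$ and the aggregation rely on the nonnegativity of $s'-|r'|E$ and of $(A-|B|E)^kv$, so I would take care to justify these monotonicity steps explicitly.
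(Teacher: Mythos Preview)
Your argument is correct. The treatment of well-definedness, nonemptiness, and optimality is essentially identical to the paper's: both invoke Prop.~\ref{prop:unique} to get $J^*(x)=x'p^*$, observe $\bar{L}^*\in\mathcal{L}_e^*$, and use Prop.~\ref{prop:classical}(c) after reducing the minimization over $|u|\le Ex$ to a componentwise linear problem.

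The stability argument, however, takes a genuinely different route. The paper groups the cost series into blocks of length $n$,
\[
\lim_{\ell\to\infty}(s'+r'L^*)\sum_{i=0}^{\ell}(A+BL^*)^ix
=(s'+r'L^*)\sum_{i=0}^{n-1}(A+BL^*)^i\,\lim_{\ell\to\infty}\sum_{j=0}^{\ell}(A+BL^*)^{nj}x,
\]
shows the leading row vector is strictly positive via the chain $(s'+r'L^*)\sum_{i=0}^{n-1}(A+BL^*)^i\ge(s'-|r'|E)\sum_{i=0}^{n-1}(A-|B|E)^i>0$, and then applies Prop.~\ref{prop:f_p} to $(A+BL^*)^n$ to conclude its spectral radius is below $1$. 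You instead apply Prop.~\ref{prop:f_p} directly to $\bar{A}=A+BL^*$, take the Perron eigenpair $(\lambda,v)$, and argue by contradiction along the eigentrajectory: $\lambda\ge1$ forces $\hat{s}'v=0$ by finiteness of $J_{L^*}(v)$, and then the matrix inequality $\bar{A}^k\ge(A-|B|E)^k$ together with nonnegativity of $s'-|r'|E$ squeezes $(s'-|r'|E)(A-|B|E)^kv=0$ for $k=0,\dots,n-1$, contradicting \eqref{eq:asum_ob}. Your approach is arguably more transparent, since it works directly with $\bar{A}$ rather than its $n$th power and makes explicit the role of the eigenvector; the paper's block decomposition, on the other hand, packages the observability bound \eqref{eq:asum_ob} in a single algebraic inequality and avoids the contradiction step. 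Both rely on the same structural ingredients ($\bar{A}\ge A-|B|E\ge0$, $\hat{s}\ge s-E'|r|\ge0$, and Prop.~\ref{prop:f_p}), so neither is more general. One minor remark: your aside that ``the same chain shows, as a byproduct, that $p^*>0$'' is not needed in your stability argument and would require a couple of extra lines (iterating $p^*=\hat{s}+\bar{A}'p^*$); the paper establishes $p^*>0$ separately in the proof of Prop.~\ref{prop:unique}.
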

Note that there may be some optimal policy that is nonlinear. To see this, assume $r_i+b_i'p^*=0$. Then we may define the $i$th term of $u$ as a nonlinear function of $x$, as long as its magnitude is bounded by $Ex$.

Adapting the classical stabilizability concept (see, e.g., \cite[p.~214]{sontag2013mathematical}, \cite[p.~121]{bertsekas2017dynamic}), we say the system is \emph{stablizable} if there exists some $L\in \Re^{m\times n}$ such that $\mu(x)=Lx$ is well-defined, i.e., $|Lx|\leq Ex$ for all $x$, and $(A+BL)$ is stable. Then a direct application of Prop.~\ref{prop:optimal_policy} is as follows.

\begin{corollary}
    \label{coro:stabilizable}
    The following two statements are equivalent:
    \begin{itemize}
    \item[(i)] $J^*(x)$ is finite for all $x\in X$.
    \item[(ii)] The system is stabilizable.
\end{itemize}
\end{corollary}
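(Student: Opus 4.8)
The plan is to establish the equivalence by proving the two implications separately, exploiting a pleasant symmetry: direction (i)$\Rightarrow$(ii) reads off a stabilizing linear policy from Prop.~\ref{prop:optimal_policy}, while direction (ii)$\Rightarrow$(i) uses any stabilizing linear policy to produce a finite upper bound on $J^*$.

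For (i)$\Rightarrow$(ii), I would assume $J^*(x)$ is finite for all $x\in X$ and invoke Prop.~\ref{prop:optimal_policy}, which guarantees that $\mathcal{L}_e^*$ is nonempty. Fixing any $L^*\in\mathcal{L}_e^*$, the definition \eqref{eq:optimal_policy_set} gives $|L^*|\leq E$, and Prop.~\ref{prop:optimal_policy} gives that $(A+BL^*)$ is stable. It then remains only to check that $\mu(x)=L^*x$ is admissible on $X=\Re^n_+$: since $x\geq 0$, the elementwise triangle inequality yields $|L^*x|\leq |L^*|\,x\leq Ex$, so $L^*x\in U(x)$ for every $x\in X$. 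Hence $L^*$ witnesses stabilizability.

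For (ii)$\Rightarrow$(i), I would take a stabilizing $L$, i.e.\ $|Lx|\leq Ex$ for all $x\in\Re^n_+$ with $(A+BL)$ stable, and evaluate the cost of the stationary policy $\mu(x)=Lx$. By the direct consequences of Assumption~\ref{asm:1}, the closed-loop trajectory $x_k=(A+BL)^kx_0$ stays in $X$ and the stage costs $g(x_k,Lx_k)=(s'+r'L)x_k$ are nonnegative, so $J_\mu(x_0)=\sum_{k=0}^\infty (s'+r'L)(A+BL)^k x_0$ is a well-defined element of $[0,\infty]$. Stability of $(A+BL)$ means its spectral radius is strictly below $1$, so the Neumann series $\sum_{k=0}^\infty (A+BL)^k$ converges to $(I-A-BL)^{-1}$; consequently $J_\mu(x_0)=(s'+r'L)(I-A-BL)^{-1}x_0<\infty$. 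Since $J^*\leq J_\mu$ pointwise by definition of the optimal cost, $J^*(x_0)$ is finite for every $x_0\in X$.

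The argument is essentially bookkeeping, so I do not anticipate a serious obstacle. The one point requiring care is in (ii)$\Rightarrow$(i): I must justify that the limit defining $J_\mu$ equals the closed form $(s'+r'L)(I-A-BL)^{-1}x_0$. This follows because spectral radius strictly less than $1$ makes $(A+BL)^k\to 0$ geometrically and the series absolutely convergent, while nonnegativity of the stage costs ensures the partial sums are monotone and hence converge to this finite value rather than oscillating. With that in place, both implications close.
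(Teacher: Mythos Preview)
Your proposal is correct and follows essentially the same approach as the paper: (i)$\Rightarrow$(ii) is read off from Prop.~\ref{prop:optimal_policy}, and (ii)$\Rightarrow$(i) evaluates $J_\mu$ for a stabilizing linear policy and bounds $J^*$ above by it. The only cosmetic difference is that the paper routes the inequality $J^*\leq J_\mu$ through Prop.~\ref{prop:classical}(a) and (b), whereas you invoke it directly from the definition of $J^*$ as an infimum over policies; your route is in fact slightly more direct.
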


For this problem, the VI algorithm takes the form\footnote{For a detailed derivation of the following formula, see \cite{rantzer2022explicit}.} 
\begin{equation}
    \label{eq:vi_pos}
    p_{k+1}=s+A'p_k-E'|r+B'p_k|.
\end{equation}
In what follows, we show that the VI algorithm \eqref{eq:vi_pos} converges to $p^*$ starting from arbitrary $p_0\in \Re^n_+$.
\begin{proposition}
    \label{prop:vi}
    Suppose that $J^*(x)$ is finite for all $x\in X$. Then the sequence $\{p_k\}$ generated by VI \eqref{eq:vi_pos} with $p_0\in \Re^n_+$ converges to $p^*$.
\end{proposition}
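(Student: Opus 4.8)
The plan is to reduce the functional value iteration to the scalar recursion \eqref{eq:vi_pos} and then squeeze the resulting nonnegative sequence between two monotone sequences that both converge to $p^*$. First I would verify by induction that value iteration started from $J_0(x)=x'p_0$ with $p_0\in\Re^n_+$ keeps every iterate linear, $J_k(x)=x'p_k$, with $p_k$ given by \eqref{eq:vi_pos} and $p_k\in\Re^n_+$; the latter holds because $J_k$ is a nonnegative function on $X$ (the stage cost is nonnegative, $f(x,u)\in X$, and $p_0\ge 0$), and a linear function $x'p_k$ is nonnegative on $\Re^n_+$ exactly when $p_k\ge 0$. Since pointwise convergence $J_k\to J^*$ on $X$ is equivalent, by testing at the unit vectors $e_i\in\Re^n_+$, to $p_k\to p^*$, it suffices to analyze the map $F(p)=s+A'p-E'|r+B'p|$.

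I would then record the structural properties of $F$ on $\Re^n_+$. Writing $x'F(p)=\inf_{u\in U(x)}\{s'x+r'u+(Ax+Bu)'p\}$ for $x\in\Re^n_+$ exhibits each component $F(\cdot)_i=e_i'F(\cdot)$ as an infimum of maps affine in $p$, hence concave; moreover $F$ is continuous, being built from linear maps and the absolute value in \eqref{eq:vi_pos}. The map is also monotone: for any admissible $L$ with $|L|\le E$, the bound $A\ge|B|E$ in \eqref{eq:asum} gives $|BL|\le|B|E\le A$, hence $A+BL\ge 0$, so each branch $p\mapsto s+(A+BL)'p+L'r$ is nondecreasing and so is their infimum $F$. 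Finally, $F$ maps $\Re^n_+$ into itself, and the standing observability condition \eqref{eq:asum_ob} forces $J^*(x)=x'p^*>0$ for every nonzero $x\in X$, i.e.\ $p^*>0$ strictly. These four facts — monotonicity, concavity, continuity, and strict positivity of the unique fixed point $p^*$ from Prop.~\ref{prop:unique} — are the workhorses.

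For the upper bound I would use strict positivity to choose a scalar $c>1$ with $cp^*\ge p_0$ (possible since $(p^*)_i>0$). Concavity together with $F(0)\ge 0$ and $F(p^*)=p^*$ yields $F(cp^*)\le cp^*$ (write $p^*=\tfrac1c(cp^*)+(1-\tfrac1c)0$ and apply concavity componentwise), so the sequence $\{F^k(cp^*)\}$ is nonincreasing; it is bounded below by $F^k(p^*)=p^*$, hence converges, and by continuity its limit is a nonnegative fixed point of $F$, which equals $p^*$ by uniqueness. By monotonicity of $F$ and $p_0\le cp^*$ we get $p_k\le F^k(cp^*)$, so $\limsup_k p_k\le p^*$. (Alternatively, $x'(cp^*)$ satisfies $J^*\le x'(cp^*)\le cJ^*$, so Prop.~\ref{prop:classical}(d) gives the same conclusion directly.)

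For the lower bound I would run the recursion from $0$: since $F(0)\ge 0$ and $F$ is monotone, $\{F^k(0)\}$ is nondecreasing, and it is bounded above by $F^k(p^*)=p^*$, so it converges; continuity makes the limit a nonnegative fixed point of $F$, again equal to $p^*$ by uniqueness. Monotonicity and $0\le p_0$ give $F^k(0)\le p_k$, so $\liminf_k p_k\ge p^*$, and combining with the upper bound yields $p_k\to p^*$. The step I expect to be the \textbf{main obstacle} is precisely this lower bound: the available convergence tool Prop.~\ref{prop:classical}(d) only certifies convergence from initial functions that dominate $J^*$, and an arbitrary $p_0\in\Re^n_+$ may lie strictly below $p^*$ in some coordinates, so (d) cannot be applied to $p_0$ itself. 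The resolution leans entirely on the uniqueness of the nonnegative solution of \eqref{eq:bellman_pos} from Prop.~\ref{prop:unique} to pin the monotone limit to $p^*$, and on the strict positivity $p^*>0$ (via \eqref{eq:asum_ob}) to secure a finite dominating scale $c$ for the upper bound.
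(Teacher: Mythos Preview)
Your proof is correct and follows essentially the same sandwich argument as the paper: bound $p_k$ between the iterates started at $0$ and at $cp^*$ (with $c>1$ chosen via $p^*>0$), and use monotonicity plus uniqueness/continuity to send both ends to $p^*$. The only cosmetic differences are that the paper calls your map $G$ rather than $F$, and that for the upper sequence the paper invokes Prop.~\ref{prop:classical}(d) directly, whereas you also supply a self-contained concavity argument for $F(cp^*)\le cp^*$; either route works.
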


Assume that $J^*(x)$ is finite for all $x\in X$. Then in view of Prop.~\ref{prop:optimal_policy}, there exists some $\mu^0(x)=L_0x$ such that $(A+BL_0)$ is stable (such as the elements of $\mathcal{L}_e^*$, which is nonempty assured by Prop.~\ref{prop:optimal_policy}). When applying PI algorithm, at a typical iteration $k$, given the policy $\mu^k(x)=L_kx$, the policy evaluation step takes the form 
\begin{equation}
    \label{eq:pi_eva_pos}
    p_{\mu^k}'=(s+L_k'r)'(I-A-BL_k)^{-1}.
\end{equation}
The improved policy $\mu^{k+1}$ takes the form $\mu^{k+1}(x)=L_{k+1}x$ with $L_{k+1}$ given as 
\begin{equation}
    \label{eq:pi_imp_pos}
    L_{k+1}=-\begin{bmatrix}
\text{sign}(r_1+b_1'p_{\mu^k})E_1\\
\vdots\\
\text{sign}(r_m+b_m'p_{\mu^k})E_m
\end{bmatrix}.
\end{equation}
Note that when $r_i+p_{\mu^k}'b_i=0$, the respective row of $L_{k+1}$ can be $\alpha E_i$ where $\alpha$ is an arbitrary constant within the interval $[-1,1]$. However, it is beneficial to restrict our attention to a smaller class of policies that contains an optimal policy. Next, we show that the above PI algorithm is well-posed and the generated cost and control pairs converge to the optimal cost and the optimal stationary policy, respectively.

\begin{proposition}
    \label{prop:pi}
    Suppose that $J^*(x)$ is finite for all $x\in X$. Then there exists some $L_0$ such that $\mu^0(x)=L_0x$ is well-defined, i.e., $L_0x\in U(x)$ for all $x\in X$, and $(A+BL_0)$ is stable. Moreover, starting from $\mu^0(x)=L_0x$, the PI algorithm \eqref{eq:pi_eva_pos} and \eqref{eq:pi_imp_pos} is well-defined, i.e., $L_kx\in U(x)$ for all $x\in X$, and $(A+BL_k)$ are stable, $k=1,2,\dots$, and $p_{\mu^k}= p^*$, $L_k=L^*$ for all $k\geq 2^m$.
\end{proposition}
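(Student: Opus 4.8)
The plan is to split the argument into the two assertions of the statement and to proceed by induction on the PI index, obtaining the termination bound from a counting argument. The existence of a feasible stabilizing $L_0$ is immediate: since $J^*$ is finite, Corollary~\ref{coro:stabilizable} (equivalently Prop.~\ref{prop:optimal_policy}) guarantees that the system is stabilizable, so some $L_0$ with $|L_0|\le E$ and $(A+BL_0)$ stable exists. Stability of $A+BL_0$ makes $I-A-BL_0$ invertible and the Neumann series $\sum_{t\ge0}(A+BL_0)^t$ convergent, so the evaluation formula \eqref{eq:pi_eva_pos} returns the finite, nonnegative cost vector $p_{\mu^0}$ of $\mu^0$, and the algorithm is well posed at the first step.

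For the inductive step I would assume $(A+BL_k)$ is stable with finite cost $p_{\mu^k}\in\Re^n_+$ and show that $L_{k+1}$ from \eqref{eq:pi_imp_pos} is again feasible and stabilizing. Feasibility is direct: each row of $L_{k+1}$ equals $\pm E_i$, so $|L_{k+1}|=E$ and $|L_{k+1}x|=Ex$ for $x\in X$, whence $L_{k+1}x\in U(x)$. For the improvement I would invoke the monotonicity in Prop.~\ref{prop:classical}(b): by construction $L_{k+1}$ attains the minimum in $g(x,u)+J_{\mu^k}(f(x,u))$, so $g(x,L_{k+1}x)+J_{\mu^k}(f(x,L_{k+1}x))\le J_{\mu^k}(x)$, and Prop.~\ref{prop:classical}(b) yields $J_{\mu^{k+1}}\le J_{\mu^k}$, i.e. $p_{\mu^{k+1}}\le p_{\mu^k}<\infty$.

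The main obstacle is the remaining implication ``finite cost of a feasible linear policy $\Rightarrow$ closed loop stable,'' which is what actually keeps $(A+BL_{k+1})$ stable. I would argue as in Prop.~\ref{prop:optimal_policy}: feasibility $|L_{k+1}|\le E$ together with \eqref{eq:asum} gives $A+BL_{k+1}\ge A-|B|E\ge0$ and $(s+L_{k+1}'r)'\ge s'-|r'|E\ge0$; applying Frobenius--Perron (Prop.~\ref{prop:f_p}) to the nonnegative matrix $A+BL_{k+1}$ produces a nonzero $v\in\Re^n_+$ with eigenvalue equal to the spectral radius. If the closed loop were not stable this radius would be $\ge1$, and using the elementwise monotonicity $(A+BL_{k+1})^t\ge(A-|B|E)^t\ge0$, the cost accumulated over each window of $n$ steps along the trajectory from $v$ would be bounded below by $(s'-|r'|E)\sum_{i=0}^{n-1}(A-|B|E)^i\,v>0$, the strict inequality coming from the observability condition \eqref{eq:asum_ob} and $v\ge0$ nonzero; since the state does not decay, the total cost diverges, contradicting $p_{\mu^{k+1}}<\infty$. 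Hence $(A+BL_{k+1})$ is stable and the induction closes. This is the step where the positive-system structure and the strict inequality \eqref{eq:asum_ob} are essential.

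Finally, for finite termination I would combine this monotonicity with counting. The improvement \eqref{eq:pi_imp_pos} depends on $p_{\mu^k}$ only through the sign pattern $\big(\text{sign}(r_1+b_1'p_{\mu^k}),\dots,\text{sign}(r_m+b_m'p_{\mu^k})\big)\in\{-1,+1\}^m$, so the improved policies $L_1,L_2,\dots$ take values in a set of at most $2^m$ matrices. If $p_{\mu^{k+1}}=p_{\mu^k}=:p$, then from the improvement identity $p$ attains the minimum in the Bellman operator and hence solves \eqref{eq:bellman_pos}; by the uniqueness in Prop.~\ref{prop:unique} this forces $p=p^*$ and $\mu^k$ optimal, after which the iteration is stationary with $L_{k+1}=\bar L^*$. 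Since $\{p_{\mu^k}\}$ is componentwise nonincreasing and each distinct policy yields a distinct cost until stabilization, a pigeonhole argument (a repeated policy forces equal consecutive costs, hence convergence) shows that $p_{\mu^k}=p^*$ and $L_k=\bar L^*$ for all $k\ge2^m$. I expect the only delicate point here to be the exact index at which stabilization occurs, which the bound $2^m$ absorbs.
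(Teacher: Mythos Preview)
Your proposal is correct and follows essentially the same route as the paper's proof: existence of $L_0$ via Prop.~\ref{prop:optimal_policy}, the generic PI improvement $J_{\mu^{k+1}}\le J_{\mu^k}$ from Prop.~\ref{prop:classical}(b), stability of each $A+BL_{k+1}$ by the same Frobenius--Perron/observability argument used for Prop.~\ref{prop:optimal_policy}, and finite termination by counting the at most $2^m$ sign patterns together with the uniqueness of the fixed point in Prop.~\ref{prop:unique}. Your stability argument via the Perron eigenvector and $n$-step windows is a cosmetic variant of the paper's rewriting $\sum_i (A+BL)^i=\big(\sum_{i=0}^{n-1}(A+BL)^i\big)\sum_j (A+BL)^{nj}$; both rely on exactly the same ingredients.
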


The PI algorithm also admits an optimistic variant. In particular, let $\{\ell_k\}$ be a sequence of positive integers. The optimistic PI starts with some $p_0$ such that 
\begin{equation}
    \label{eq:op_pi_initial_pos}
    p_0\geq s+A'p_0-E'|r+B'p_0|.
\end{equation}
At a typical iteration $k$, given $p_k$, it defines a policy $\mu^k(x)=L_k(x)$ by
\begin{equation}
    \label{eq:op_pi_policy}
    L_k=-\begin{bmatrix}
\text{sign}(r_1+b_1'p_k)E_1\\
\vdots\\
\text{sign}(r_m+b_m'p_k)E_m
\end{bmatrix},
\end{equation}
and obtain $p_{k+1}$ by
\begin{equation}
    \label{eq:op_pi_cost}
    p_{k+1}'=p_k'(A+BL_k)^{\ell_k}+(s+L_k'r)'\sum_{i=0}^{\ell_k-1}(A+BL_k)^i.
\end{equation}
In what follows, we will show that the optimistic PI is well-posed and the generated $p_k$ converges to $p^*$.

\begin{proposition}
    \label{prop:optimistic_pi}
    Suppose that $J^*(x)$ is finite for all $x\in X$. Then there exists $p_0$ that satisfies the inequality \eqref{eq:op_pi_initial_pos}, and the optimistic PI \eqref{eq:op_pi_policy} and \eqref{eq:op_pi_cost} is well-defined, i.e., $L_kx\in U(x)$ for all $x\in X$, $k=0,1,\dots$. Moreover, $(A+BL_k)$ are stable, $k=0,1,\dots$, the generated $p_k$ converges to $p^*$, i.e., $p_k\to p^*$, and there exists some $\Bar{k}$ such that $L_k\in \mathcal{L}_e^*$ for all $k\geq \Bar{k}$.
\end{proposition}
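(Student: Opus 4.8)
The plan is to recast the optimistic PI recursion in operator form and then combine the monotone-DP machinery of Prop.~\ref{prop:classical} with the Frobenius--Perron theorem (Prop.~\ref{prop:f_p}). For a linear policy $L$ with $|L|\le E$, introduce the evaluation operator $T_L(p)=s+L'r+(A+BL)'p$ and the Bellman operator $T(p)=s+A'p-E'|r+B'p|$, both on $\Re^n$. A direct computation shows that the $L_k$ of \eqref{eq:op_pi_policy} is exactly the greedy policy at $p_k$, so $T(p_k)=T_{L_k}(p_k)$, and that \eqref{eq:op_pi_cost} is precisely $p_{k+1}=T_{L_k}^{\ell_k}(p_k)$. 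Two elementary consequences of Assumption~\ref{asm:1} drive the whole argument: since $|L_k|\le E$ and $A\ge|B|E$, the matrix $A+BL_k\ge A-|B|E\ge0$, so each $T_{L_k}$ is monotone; and since $s\ge E'|r|$, the vector $s+L_k'r\ge s-E'|r|\ge0$.

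For existence of an initial $p_0$ satisfying \eqref{eq:op_pi_initial_pos}, I would take the cost vector of any stabilizing linear policy: such a policy exists by Prop.~\ref{prop:optimal_policy} (e.g.\ any $L^*\in\mathcal{L}_e^*$), its evaluation $p^*$ is a fixed point of $T_{L^*}$, and hence $T(p^*)\le T_{L^*}(p^*)=p^*$. Well-definedness of the iterates is immediate: $|L_k|\le E$ and $x\ge0$ give $|L_kx|\le Ex$, so $L_kx\in U(x)$. The core monotonicity step is an induction showing every $p_k$ is a supersolution, i.e.\ $T(p_k)\le p_k$. Granting $T(p_k)\le p_k$, greediness gives $T_{L_k}(p_k)=T(p_k)\le p_k$; monotonicity of $T_{L_k}$ then makes $\{T_{L_k}^i(p_k)\}_i$ nonincreasing, so $p_{k+1}=T_{L_k}^{\ell_k}(p_k)\le T_{L_k}(p_k)=T(p_k)\le p_k$; applying $T\le T_{L_k}$ once more yields $T(p_{k+1})\le T_{L_k}(p_{k+1})=T_{L_k}^{\ell_k+1}(p_k)\le p_{k+1}$, closing the induction. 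Because $T(p_k)\le p_k$, Prop.~\ref{prop:classical}(a) applied to $\hat{J}(x)=x'p_k$ gives $p^*\le p_k$. Thus $\{p_k\}$ decreases and is bounded below by $p^*$, so it converges to some $\bar{p}\ge p^*\ge0$; passing to the limit in $p_{k+1}\le T(p_k)\le p_k$ and using continuity of $T$ gives $T(\bar{p})=\bar{p}$, whence $\bar{p}=p^*$ by the uniqueness in Prop.~\ref{prop:unique}.

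The substantive part is the stability of every $A+BL_k$. Writing $M=A+BL_k$ and $c=s+L_k'r$, the supersolution inequality reads $M'p_k+c\le p_k$; iterating with $M'\ge0$ gives $\sum_{i=0}^{N-1}(M')^ic\le p_k$ for all $N$, so the nonnegative series $q=\sum_{i=0}^{\infty}(M')^ic$ converges. The observability bound \eqref{eq:asum_ob} transfers to $(M,c)$: from $M\ge A-|B|E$ and $c\ge s-E'|r|$ one gets $c'\sum_{i=0}^{n-1}M^i\ge(s'-|r'|E)\sum_{i=0}^{n-1}(A-|B|E)^i>0$. Now suppose $A+BL_k$ is not stable; by Prop.~\ref{prop:f_p} its dominant eigenvalue $\lambda\ge0$ satisfies $\lambda\ge1$ and admits $w\in\Re^n_+$, $w\ne0$, with $Mw=\lambda w$. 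Then $q'w=(c'w)\sum_{i\ge0}\lambda^i$ must be finite, forcing $c'w=0$; but then $\big(c'\sum_{i=0}^{n-1}M^i\big)w=(c'w)\sum_{i=0}^{n-1}\lambda^i=0$, contradicting $c'\sum_{i=0}^{n-1}M^i>0$ against $w\ge0$, $w\ne0$. Hence $\lambda<1$ and $A+BL_k$ is stable for every $k$. Finally, since $p_k\to p^*$ and there are finitely many indices $i$, for all large $k$ the sign of each $r_i+b_i'p_k$ with $r_i+b_i'p^*\ne0$ agrees with that of $r_i+b_i'p^*$, so the corresponding rows of $L_k$ coincide with those of $\bar{L}^*$; rows with $r_i+b_i'p^*=0$ contribute zero to the defining sum of $\mathcal{L}_e^*$, so $L_k\in\mathcal{L}_e^*$ for all $k\ge\bar{k}$.

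The main obstacle is this per-iteration stability claim: convergence $p_k\to p^*$ only forces $L_k\in\mathcal{L}_e^*$ eventually and says nothing about the transient policies, so stability of each $A+BL_k$ must be extracted directly from the supersolution property rather than from the limit. The Frobenius--Perron argument above is the crux, and its success hinges on transferring the observability inequality \eqref{eq:asum_ob} from the worst-case pair $(A-|B|E,\,s-E'|r|)$ to the actual closed-loop data $(M,c)$ via the elementwise bounds that Assumption~\ref{asm:1} guarantees.
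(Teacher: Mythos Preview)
Your proof is correct and uses the same ingredients as the paper: the supersolution property $T(p_k)\le p_k$ propagated by monotonicity of $T$ and $T_{L_k}$, the lower bound $p^*\le p_k$ from Prop.~\ref{prop:classical}(a), the observability transfer from $(A-|B|E,\,s-E'|r|)$ to $(A+BL_k,\,s+L_k'r)$, the Frobenius--Perron contradiction for stability, and the sign-stabilization argument for eventual optimality of $L_k$.

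The only methodological difference is in how $p_k\to p^*$ is concluded. The paper sandwiches $p^*\le p_k\le\bar p_k$, where $\bar p_k$ is the VI sequence from $p_0$, and then invokes Prop.~\ref{prop:vi} to squeeze. You instead use that $\{p_k\}$ is monotone decreasing and bounded below, pass to the limit in $p_{k+1}\le T(p_k)\le p_k$ by continuity of $T$, and invoke the uniqueness in Prop.~\ref{prop:unique} to identify the limit. This is slightly more self-contained (it does not rely on VI convergence as a separate lemma); the paper's route, conversely, makes the comparison with VI explicit. Your stability argument is likewise a direct, self-contained version of what the paper obtains by referring back to the proofs of Props.~\ref{prop:optimal_policy} and \ref{prop:pi}, but the Frobenius--Perron mechanism is identical.
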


The problem can also be solved by linear programming in view of the uniqueness result given in Prop.~\ref{prop:unique}. The linear program takes the form 

\begin{subequations}
\label{eq:linear}
    \begin{align}
	\max_{p,\gamma}& \quad 1'p\\
	\mathrm{s.\,t.} & \quad p=s+A'p-E'\gamma,\label{eq:linear_eq}\\
	& \quad  -\gamma\leq r+B'p\leq \gamma, \label{eq:linear_ineq}\\
	& \quad  p\in \Re^n_+,\;\gamma\in \Re^m_+,\label{eq:linear_range}
	\end{align}
\end{subequations} 
where $1$ denotes a vector of suitable dimension with all elements equal to scalar one. The linear program has been given in \cite[Theorem~1]{rantzer2022explicit} to find the optimal solution $J^*$. However, the pathological case indicated in Example~\ref{eg:bellman_nonunique} is not addressed there. Here we highlight the importance of the uniqueness of the solution by proving the following proposition. 

\begin{proposition}
    \label{prop:linear}
    The following two statements are equivalent:
    \begin{itemize}
    \item[(i)] $J^*(x)$ is finite for all $x\in X$.
    \item[(ii)] The linear program \eqref{eq:linear} admits a finite optimal value.
\end{itemize}
\end{proposition}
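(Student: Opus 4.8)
The plan is to read the linear program \eqref{eq:linear} as a relaxed form of the fixed point equation \eqref{eq:bellman_pos}. Write $Tp=s+A'p-E'|r+B'p|$ for the right-hand side of \eqref{eq:bellman_pos}, so that value iteration \eqref{eq:vi_pos} is just $p_{k+1}=Tp_k$ and, for a linear function $x'p$, the Bellman operator returns the linear function $x'(Tp)$. Two facts from the general theory will be used repeatedly: since $g\ge0$ on $X\times U(x)$, the operator maps $\Re^n_+$ into itself (testing $x'(Tp)\ge0$ against the unit vectors gives $Tp\ge0$), and since the Bellman operator is monotone, $T$ is monotone on $\Re^n_+$. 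The observation linking the two problems is that feasibility is essentially the sub-solution property: if $(p,\gamma)$ satisfies \eqref{eq:linear_eq}--\eqref{eq:linear_range}, then $E\ge0$ and $\gamma\ge|r+B'p|$ give $E'\gamma\ge E'|r+B'p|$, so \eqref{eq:linear_eq} yields $p=s+A'p-E'\gamma\le Tp$; conversely, $(p^*,|r+B'p^*|)$ is feasible whenever $p^*=Tp^*$.

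For (i)$\Rightarrow$(ii), assume $J^*$ is finite. By Prop.~\ref{prop:unique} there is a unique $p^*\in\Re^n_+$ with $p^*=Tp^*$ and $J^*(x)=x'p^*$, and $(p^*,|r+B'p^*|)$ is feasible, so \eqref{eq:linear} is feasible with value at least $1'p^*$. For the upper bound, take any feasible $(p,\gamma)$; then $p\in\Re^n_+$ and $p\le Tp$, so by monotonicity the value-iteration sequence $\{T^kp\}$ is nondecreasing, while Prop.~\ref{prop:vi} shows $T^kp\to p^*$. Hence $p\le p^*$ and $1'p\le1'p^*$, so \eqref{eq:linear} has optimal value exactly $1'p^*<\infty$. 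Note that this direction, and with it the exclusion of the pathology of Example~\ref{eg:bellman_nonunique}, rests on Prop.~\ref{prop:vi}, and thus ultimately on the observability condition \eqref{eq:asum_ob}.

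For (ii)$\Rightarrow$(i) I would argue by contraposition: assume $J^*$ is not finite, so by Prop.~\ref{prop:unique} the equation \eqref{eq:bellman_pos} has no solution in $\Re^n_+$, and show that \eqref{eq:linear} cannot have a finite optimal value. If \eqref{eq:linear} is infeasible there is nothing to prove, so suppose it is feasible at some $\hat p$. Then $\hat p\in\Re^n_+$ and $\hat p\le T\hat p$, and the nondecreasing sequence $\{T^k\hat p\}\subset\Re^n_+$ cannot converge, since a limit would be a nonnegative fixed point of the continuous map $T$, which is excluded; hence $1'T^k\hat p\to\infty$. It remains to convert this divergence into unboundedness of \eqref{eq:linear}; equivalently, since a bounded feasible linear program attains its maximum, it suffices to show that a finite optimum would be attained at some $\hat p$ with $\hat p=T\hat p$, which is impossible here by Prop.~\ref{prop:unique}.

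Establishing that the maximizer is a genuine fixed point is the crux, and the hardest step. The difficulty is that feasibility of \eqref{eq:linear} is strictly stronger than $p\le Tp$: via \eqref{eq:linear_eq} one needs $Tp-p$ to lie in the finitely generated cone $E'\Re^m_+$, not merely in $\Re^n_+$, and along a perturbation ray the sign pattern of $r+B'p$---hence the closed-loop matrix defining $T$---may change. To resolve this I would combine LP optimality with the Frobenius--Perron theorem (Prop.~\ref{prop:f_p}) applied to the greedy closed-loop matrix $A+BL$, which is nonnegative (since $A+BL\ge A-|B|E\ge0$) and, by non-stabilizability from Cor.~\ref{coro:stabilizable}, has spectral radius at least one; its Perron eigenvector $w\ge0$ gives a direction with $(A+BL)'w=\rho w$, $\rho\ge1$. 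Exploiting the finiteness of the set of sign patterns, the signs stabilize on a terminal segment where $T$ is affine, and there the increment of $Tp-p$ along $\hat p+tw$ equals $t(\rho-1)w$; the role of the observability condition \eqref{eq:asum_ob} is to force this direction into the cone $E'\Re^m_+$, so that moving along $w$ preserves feasibility while strictly increasing $1'p$. This contradicts optimality unless $\hat p=T\hat p$, completing the contrapositive.
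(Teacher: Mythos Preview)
Your direction (i)$\Rightarrow$(ii) is correct and coincides with the paper's argument.

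For (ii)$\Rightarrow$(i) you and the paper aim at the same intermediate claim---an LP maximizer $\hat p$ must be a fixed point of $T$ (the paper writes $G$ for your $T$), after which Prop.~\ref{prop:classical}(a) finishes---but the paper's route is a one-liner. It declares that the LP constraints are equivalent to $p\le G(p)$, $p\in\Re^n_+$; granting this, $G(\hat p)$ is itself feasible with $1'G(\hat p)\ge 1'\hat p$, so maximality forces $G(\hat p)=\hat p$. No spectral theory, no stabilizability, no cone geometry. Your observation that, with the \emph{equality} in \eqref{eq:linear_eq}, feasibility is the strictly stronger condition $G(p)-p\in E'\Re^m_+$ is correct, and the paper's equivalence is literally valid only if \eqref{eq:linear_eq} is read as $\le$ (as in its norm-bound counterpart \eqref{eq:optimization_eq}); the paper does not comment on this.

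Your Perron-based repair, however, has a real gap: the assertion that observability \eqref{eq:asum_ob} forces the increment $(\rho-1)w$ into the cone $E'\Re^m_+$ is neither proved nor true. Take $n=2$, $m=1$, $r=0$, $s=(1,1)'$, $E=(1,0)$, $B=(1,0)'$, and $A$ with rows $(2,1)$, $(0,2)$. Assumption~\ref{asm:1} (including \eqref{eq:asum_ob}) holds and no $L\in\mathcal{L}_e$ removes the closed-loop eigenvalue $2$, so the system is not stabilizable. The greedy closed loop has rows $(1,1)$, $(0,2)$; the Perron eigenvector of its transpose at $\rho=2$ is $w=(0,1)'$, and $(\rho-1)w=(0,1)'$ lies outside $E'\Re^m_+=\{(a,0)':a\ge0\}$. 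Thus moving along $w$ does not preserve \eqref{eq:linear_eq}, and the contradiction you want does not follow. The step you yourself identified as the crux is not closed by your argument.
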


\section{Dynamic Programming for the Norm Bound Problems}\label{sec:norm}
In the preceding section, we developed DP theory for optimal control of positive linear systems where the elements of the control are bounded by linear functions of states. In this section, we study optimal control problems where the norms of the control are bounded. \emph{For this class of problems, we summarize the related conditions given in Section~\ref{sec:formulation} as the following standing assumption, which is omitted in all theoretical results of this section.} 
\begin{assumption}\label{asm:2}
The system dynamics and the stage cost are given as \eqref{eq:dp_dynamics} and \eqref{eq:dp_cost}, respectively. The control constraint sets $U(x)$ take the form \eqref{eq:control_norm_b}, where the elements of $N$ are nonnegative. Moreover,
\begin{equation}
    \label{eq:asum_norm}
    A\geq [\Vert B_1'\Vert_*\dots \Vert B_n'\Vert_*]'N,\quad s\geq N'\Vert r\Vert_*,
\end{equation}
and 
\begin{equation}
    \label{eq:asum_norm_ob}
    (s'-\Vert r\Vert_*N)\sum_{i=0}^{n-1}\big(A- [\Vert B_1'\Vert_*\dots \Vert B_n'\Vert_*]'N\big)^i>0.
\end{equation}
\end{assumption}

Under Assumption~\ref{asm:2}, the identical observations following Assumption~\ref{asm:1} can be made for the norm bound problem. We are now ready to state the main results for this class of problems. One may notice the similarity between those stated in this section and the preceding section.

\begin{proposition}
    \label{prop:unique_norm}
    The following two statements are equivalent:
    \begin{itemize}
    \item[(i)] $J^*(x)$ is finite for all $x\in X$.
    \item[(ii)] There exists some $p\in \Re^n_+$ such that 
    \begin{equation}
        \label{eq:bellman_pos_norm}
        p=s+A'p-N'\Vert r+B'p\Vert_*.
    \end{equation}
    Moreover, the solution $p^*$ is unique within $\Re^n_+$ and $J^*(x)=x'p^*$ for all $x\in X$.
\end{itemize}
\end{proposition}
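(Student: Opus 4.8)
The plan is to mirror the argument behind Prop.~\ref{prop:unique}, since the only structural change is that the elementwise minimization over $|u|\le Ex$ is replaced by a norm-constrained minimization over $\Vert u\Vert\le Nx$. The single computation that drives everything is the dual-norm identity: for $x\in X=\Re^n_+$ and $p\in\Re^n_+$, since $Nx\ge0$, one has $\min_{\Vert u\Vert\le Nx}(r+B'p)'u=-(Nx)\,\Vert r+B'p\Vert_*$, the minimum being attained on the compact ball $\{u:\Vert u\Vert\le Nx\}$. Substituting the linear ansatz $J(x)=x'p$ into the Bellman operator therefore returns another linear function $x'\,T(p)$ with
$$T(p)=s+A'p-N'\Vert r+B'p\Vert_*,$$
so the VI recursion \eqref{eq:vi} restricted to linear functions is exactly the fixed-point map \eqref{eq:bellman_pos_norm}. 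Throughout I abbreviate $\beta=[\Vert B_1'\Vert_*\cdots\Vert B_n'\Vert_*]'$, $\hat A=A-\beta N$, and $c=s-N'\Vert r\Vert_*$; by \eqref{eq:asum_norm} we have $\hat A\ge0$ and $c\ge0$, and from $\Vert r+B'p\Vert_*\le\Vert r\Vert_*+\beta'p$ (valid for $p\ge0$, using $\Vert B'p\Vert_*\le\beta'p$) one obtains the basic bound $T(p)\ge c+\hat A'p$.

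For (ii)$\Rightarrow$(i): if $p\in\Re^n_+$ solves \eqref{eq:bellman_pos_norm}, then $\hat J(x)=x'p$ satisfies Bellman's equation with equality, so Prop.~\ref{prop:classical}(a) gives $J^*\le\hat J<\infty$. For (i)$\Rightarrow$(ii) together with the formula $J^*=x'p^*$, I would run VI from $J_0\equiv0$. By the displayed computation and the bound $T(p)\ge c+\hat A'p$, an induction shows each iterate is linear and nonnegative, $J_k(x)=x'p_k$ with $p_k\in\Re^n_+$; monotonicity of the VI operator together with $J_0=0\le J^*=TJ^*$ (Prop.~\ref{prop:classical}(a)) yields $0=p_0\le p_1\le\cdots$ and $J_k=T^kJ_0\le T^kJ^*=J^*$, so each component $p_{k,i}=J_k(e_i)\le J^*(e_i)<\infty$ (evaluating at the $i$th unit vector $e_i\in X$) is bounded. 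Hence $p_k$ increases to some $p^*\in\Re^n_+$, and continuity of $T$ makes $p^*$ a solution of \eqref{eq:bellman_pos_norm}. Finally $x'p^*=\lim_kJ_k\le J^*$, while $x'p^*$ solving \eqref{eq:bellman_pos_norm} forces $J^*\le x'p^*$ by Prop.~\ref{prop:classical}(a); thus $J^*=x'p^*$ is linear.

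For uniqueness the key is to show $p^*>0$, and this is exactly where the observability condition \eqref{eq:asum_norm_ob} enters. Applying $T(p^*)\ge c+\hat A'p^*$ to the fixed point $p^*=T(p^*)$ and iterating gives $p^*\ge\sum_{i=0}^{k-1}(\hat A')^i c+(\hat A')^kp^*\ge\sum_{i=0}^{k-1}(\hat A')^ic$ for every $k$; taking $k=n$ and transposing \eqref{eq:asum_norm_ob} gives $p^*\ge\big(\sum_{i=0}^{n-1}\hat A^i\big)'c>0$. Now let $p\in\Re^n_+$ be any solution. As in (ii)$\Rightarrow$(i), $x'p$ satisfies Bellman's equation, so Prop.~\ref{prop:classical}(a) gives $J^*=x'p^*\le x'p$ on $X$, i.e.\ $p\ge p^*$. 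If $p\neq p^*$, then since $p^*>0$ the ratio $c_0=\max_i p_i/p^*_i$ is finite and $\ge1$, with $p\le c_0p^*$; the case $c_0=1$ gives $p=p^*$ directly, while $c_0>1$ yields $J^*\le x'p\le c_0J^*$, so Prop.~\ref{prop:classical}(d) forces VI started from $x'p$ to converge to $J^*$. But $p$ solves \eqref{eq:bellman_pos_norm}, so $x'p$ is a fixed point and VI never leaves it; hence $x'p=J^*=x'p^*$ and $p=p^*$, contradicting $c_0>1$. Therefore the nonnegative solution is unique.

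The main obstacle is the uniqueness claim, and within it the positivity $p^*>0$: the bound $T(p)\ge c+\hat A'p$ alone is too weak, and it is precisely the \emph{strict} inequality \eqref{eq:asum_norm_ob} that upgrades $p^*\ge0$ to $p^*>0$ and thereby supplies the comparison $p\le c_0p^*$ needed to invoke Prop.~\ref{prop:classical}(d); the pathology when the strictness fails is the norm-bound analogue of Example~\ref{eg:bellman_nonunique}. The remaining steps are routine: verifying that the dual-norm minimum is attained (so that the associated stationary policy is well defined) and that $\Vert B'p\Vert_*\le\beta'p$ and $\Vert r+B'p\Vert_*\le\Vert r\Vert_*+\beta'p$ for $p\ge0$. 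Notably, this particular result relies only on the generic DP facts of Prop.~\ref{prop:classical}; the Frobenius--Perron theorem (Prop.~\ref{prop:f_p}) is not needed here and is instead reserved for the stability statements.
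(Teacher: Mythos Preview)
Your proposal is correct and follows essentially the same route as the paper: reduce VI on linear functions to the map $T(p)=s+A'p-N'\Vert r+B'p\Vert_*$, run VI from $p_0=0$ to obtain a monotone sequence converging to a fixed point $p^*$ with $J^*(x)=x'p^*$, use the triangle-inequality bound $T(p)\ge c+\hat A'p$ together with \eqref{eq:asum_norm_ob} to obtain $p^*>0$, and then sandwich any other solution between $p^*$ and $c_0p^*$ so that Prop.~\ref{prop:classical}(d) forces uniqueness. The only cosmetic difference is that the paper obtains $p^*>0$ by lower-bounding the $n$th VI iterate $p_n$ (and then using $p^*\ge p_n$), whereas you iterate the same bound directly on the fixed point; both yield the identical inequality $p^*\ge\big(\sum_{i=0}^{n-1}\hat A^i\big)'c>0$.
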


As in the absolute value bound problem, the strict inequality in \eqref{eq:asum_norm_ob} is needed to ensure the uniqueness of the solution. As an illustration, we provide an example.

\begin{example}\label{eg:bellman_nonunique_norm}
Let us consider a scalar system with $A=0.9$, $B=1$, $s=1$, $r=-10$, and $N=0.1$. One can verify that the first inequality in Assumption~\ref{asm:2} holds while $s= N\Vert r\Vert_*$. Then the optimal cost is $J^*(x)=0$ for all $x$ with $p^*=0$. This is achieved by the policy $\mu(x)=0.1x$ so that the stage cost under this policy is identically zero. 

Let us now examine the right-hand side of \eqref{eq:bellman_pos_norm}, which takes the form
$$s+A'p-N'\Vert r+B'p\Vert_*=1+0.9p-0.1|-10+p|.$$
For $p\in [0,10]$, the expression can be written as $1+0.9p-0.1(10-9)=p$. As a result, every $p\in [0,10]$ is a nonnegative solution to the corresponding Bellman's equation. Similarly, for $p>10$, the right-hand side takes the form $2+0.8p$, which is strictly less than $p$ as $p>10$.
\end{example}

Similar to the preceding section, for the norm bound problem, if $J^*$ is finite, the optimal linear policies lead to stable matrices. However, the parameters cannot be specified by analytical formulas applicable to arbitrary norms. To describe those policies succinctly, we introduce a set-valued function $\mathcal{S}$ that maps vectors $v\in \Re^m$ to the power sets of $\Re^m$, given as\footnote{For readers familiar with convex analysis, the dual norm $\Vert\cdot\Vert_*$ is a closed proper convex function, and the set-valued function $\mathcal{S}$ defines its subdifferential, the elements of which are subgradients; see, e.g., \cite[p.~214]{rockafellar1997convex} and \cite[p.~182]{bertsekas2009convex}. As a result, the convergence of policies stated in Props.~\ref{prop:pi_norm} and \ref{prop:optimistic_pi_norm} can be established via continuity property of subgradients for closed proper convex functions; cf. \cite[Theorem~24.4]{rockafellar1997convex}. However, our proofs do not rely on such connections.}
\begin{equation}
    \label{eq:subdifferential}
    \mathcal{S}(v)=\begin{cases}
\{w\in \Re^m\,|\,\Vert w\Vert =1,\;w'v=\Vert v\Vert_*\}&\text{if }v\neq 0,\\
\{w\in \Re^m\,|\,\Vert w\Vert \leq1\}&\text{otherwise.}\end{cases}
\end{equation}
Then the set of parameters characterizing optimal linear policies form the set $\mathcal{L}_n^*$, which is defined as
\begin{equation}
    \label{eq:optimal_policy_set_norm}
    \mathcal{L}_n^*=\big\{L\in \Re^{m\times n}\,\big|\,L=-wN,\;w\in \mathcal{S}(r+B'p^*)\big\}.
\end{equation}
We have the following result regarding the optimal policies.
\begin{proposition}
    \label{prop:optimal_policy_norm}
    Suppose that $J^*(x)$ is finite for all $x\in X$. Then the set $\mathcal{L}_n^*$ given in \eqref{eq:optimal_policy_set_norm} is well-defined and nonempty. Moreover, for every $L^*\in \mathcal{L}_n^*$, the stationary linear policy $L^*x$ is optimal, i.e., $\mu^*(x)=L^*x$, and $(A+BL^*)$ is stable. 
\end{proposition}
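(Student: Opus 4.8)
The plan is to mirror the structure of the absolute value bound case (Prop.~\ref{prop:optimal_policy}), replacing the componentwise $\mathrm{sign}$ construction by the subdifferential-type set $\mathcal{S}$. First I would establish that $\mathcal{L}_n^*$ is well-defined and nonempty. Since $J^*$ is finite, Prop.~\ref{prop:unique_norm} guarantees a unique $p^*\in\Re^n_+$ solving the Bellman equation \eqref{eq:bellman_pos_norm}. The set $\mathcal{S}(r+B'p^*)$ is nonempty for any argument: if $r+B'p^*\neq 0$, nonemptiness follows from the definition of the dual norm as a supremum over the unit ball, which is attained by compactness and gives a unit-norm maximizer; if $r+B'p^*=0$, the set is the whole unit ball. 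Hence $\mathcal{L}_n^*$ is nonempty, and every $L=-wN$ with $w\in\mathcal{S}(r+B'p^*)$ satisfies $\|Lx\|=\|w\|\,|Nx|\leq Nx$ for $x\in X$ (using $Nx\geq 0$ and $\|w\|\leq 1$), so $Lx\in U(x)$ and the policy is well-defined.

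Next I would verify optimality via Prop.~\ref{prop:classical}(c): it suffices to show that $L^*x$ attains the minimum in Bellman's equation at $J^*(x)=x'p^*$ for every $x\in X$. The per-stage quantity to minimize is $s'x+r'u+(Ax+Bu)'p^* = s'x+x'A'p^* + (r+B'p^*)'u$ over $u$ with $\|u\|\leq Nx$. Writing $v=r+B'p^*$, the minimum of $v'u$ over $\|u\|\leq Nx$ equals $-\|v\|_*\,Nx$ by the definition of the dual norm, and the minimizer is $u = -(Nx)\,w$ for any $w\in\mathcal{S}(v)$ — precisely $u=L^*x$. Substituting gives $s'x+x'A'p^*-N'\|r+B'p^*\|_*\,x = x'(s+A'p^*-N'\|v\|_*) = x'p^*$ by \eqref{eq:bellman_pos_norm}, confirming that $L^*x$ achieves the Bellman optimum; hence $\mu^*(x)=L^*x$ is optimal. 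I should note the subtlety that the minimizer $-(Nx)w$ is independent of $x$ only up to the scalar $Nx$, so a single $w$ yields a genuinely stationary linear policy for all $x$.

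The main obstacle, and the part requiring the Frobenius-Perron machinery (Prop.~\ref{prop:f_p}), is proving that $(A+BL^*)$ is stable. Here I would argue as in the absolute value case. Since $L^*x$ is optimal, its cost function equals $J^*$, so $p_{\mu^*}=p^*$ solves the policy evaluation identity $p^* = (s+ (L^*)'r) + (A+BL^*)'p^*$; rearranged, $(I-(A+BL^*)')p^* = s+(L^*)'r$. The key is to show the right-hand side is strictly positive, i.e. $s+(L^*)'r>0$, which follows from plugging $u=L^*x$ into the observability-type inequality \eqref{eq:asum_norm_ob}: the stage cost $s'x+r'(L^*x)=x'(s+(L^*)'r)$ must be strictly positive along the trajectory driven by $A+BL^*$, and \eqref{eq:asum_norm_ob} with the relation $A+BL^*\leq A-[\|B_1'\|_*\cdots\|B_n'\|_*]'N$ (valid since $\|B_i(L^*x)\|\leq \|B_i'\|_*\|L^*x\|$ elementwise) supplies the strict inequality. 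I would then apply Prop.~\ref{prop:f_p} to the nonnegative matrix $(A+BL^*)'$: its Perron eigenvalue $\lambda\geq 0$ has an eigenvector $z\in\Re^n_+$, and from $(I-(A+BL^*)')p^*>0$ together with $z\geq 0$, $z\neq 0$, taking inner products yields $(1-\lambda)\,z'p^*>0$, forcing $\lambda<1$ (after confirming $z'p^*>0$, which uses strict positivity of the relevant cost components). Since $\lambda$ dominates all eigenvalue moduli, $(A+BL^*)$ is stable. The delicate point throughout is ensuring the strict positivity needed to rule out $\lambda=1$, which is exactly what \eqref{eq:asum_norm_ob} is designed to provide; I expect carrying that strictness cleanly through the norm inequalities to be the technical crux.
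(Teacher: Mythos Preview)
Your treatment of well-definedness and optimality of $\mathcal{L}_n^*$ is correct and matches the paper's route (via Prop.~\ref{prop:classical}(c) after verifying the dual-norm minimization). The stability argument, however, has a genuine gap.

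You assert $s+(L^*)'r>0$, equivalently $(I-(A+BL^*)')p^*>0$, and lean on the observability condition \eqref{eq:asum_norm_ob} for this. But \eqref{eq:asum_norm} only gives $s+(L^*)'r\geq s-N'\Vert r\Vert_*\geq 0$, and \eqref{eq:asum_norm_ob} is a \emph{sum} condition: it guarantees strict positivity of the row vector $(s'-\Vert r\Vert_*N)\sum_{i=0}^{n-1}(\,\cdot\,)^i$, not of $s-N'\Vert r\Vert_*$ itself. So some components of $s+(L^*)'r$ may well be zero, and then for a Perron eigenvector $z$ supported on those components your inner-product argument only yields $\lambda\leq 1$, not $\lambda<1$. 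You also have the matrix inequality backwards: the correct relation is $A+BL^*\geq A-[\Vert B_1'\Vert_*\cdots\Vert B_n'\Vert_*]'N\geq 0$ (needed, incidentally, to apply Frobenius--Perron at all), since $|B_iw|\leq\Vert B_i'\Vert_*\Vert w\Vert$ gives a \emph{lower} bound on $-B_iwN_j$.

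The paper avoids this by not trying to make $s+(L^*)'r$ strictly positive. It instead combines the two inequalities $s+(L^*)'r\geq s-N'\Vert r\Vert_*$ and $A+BL^*\geq A-[\Vert B_1'\Vert_*\cdots\Vert B_n'\Vert_*]'N\geq 0$ to obtain
\[
(s'+r'L^*)\sum_{i=0}^{n-1}(A+BL^*)^i\;\geq\;(s'-\Vert r\Vert_*N)\sum_{i=0}^{n-1}\big(A-[\Vert B_1'\Vert_*\cdots\Vert B_n'\Vert_*]'N\big)^i\;>\;0,
\]
and then uses the \emph{finiteness} of $J^*(x)=\lim_{\ell\to\infty}(s'+r'L^*)\sum_{i=0}^{\ell}(A+BL^*)^i x$: grouping the series into blocks of $n$ terms and applying Prop.~\ref{prop:f_p} to the nonnegative matrix $(A+BL^*)^n$ forces its spectral radius below $1$. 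Your eigenvector route can be repaired---applying the strictly positive row vector above to the Perron eigenvector $z$ of $A+BL^*$ gives $z'(s+(L^*)'r)\big(\sum_{i=0}^{n-1}\lambda^i\big)>0$, hence $z'(s+(L^*)'r)>0$---but as written the strict-positivity step does not go through.
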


As in the previous section, we say the system is stabilizable if there exists some $L\in \Re^{m\times n}$ such that $\Vert Lx\Vert\leq Nx$ for all $x$, and $(A+BL)$ is stable. The following corollary can be shown by using Prop.~\ref{prop:optimal_policy_norm}.

\begin{corollary}
    \label{coro:stabilizable_norm}
    The following two statements are equivalent:
    \begin{itemize}
    \item[(i)] $J^*(x)$ is finite for all $x\in X$.
    \item[(ii)] The system is stabilizable.
\end{itemize}
\end{corollary}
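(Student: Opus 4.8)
The plan is to prove the two implications separately. The forward direction (i)~$\Rightarrow$~(ii) is essentially packaged into Prop.~\ref{prop:optimal_policy_norm}, while the reverse direction (ii)~$\Rightarrow$~(i) reduces to a standard Neumann-series evaluation of the cost of a stabilizing linear policy.

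For (i)~$\Rightarrow$~(ii), I would assume $J^*(x)$ is finite for all $x\in X$ and invoke Prop.~\ref{prop:optimal_policy_norm} to conclude that $\mathcal{L}_n^*$ is nonempty. Picking any $L^*\in\mathcal{L}_n^*$, I would first check feasibility: writing $L^*=-wN$ with $w\in\mathcal{S}(r+B'p^*)$ so that $\Vert w\Vert\leq 1$, and using that the elements of $N$ are nonnegative so $Nx\geq 0$ for $x\in\Re^n_+$, we get $\Vert L^*x\Vert=(Nx)\Vert w\Vert\leq Nx$, i.e.\ $L^*x\in U(x)$ for all $x\in X$. Since Prop.~\ref{prop:optimal_policy_norm} also asserts that $(A+BL^*)$ is stable, the linear policy $\mu(x)=L^*x$ witnesses stabilizability, establishing (ii).

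For (ii)~$\Rightarrow$~(i), I would start from a stabilizing feasible $L$, so that $\mu(x)=Lx$ satisfies $\Vert Lx\Vert\leq Nx$ for all $x\in X$ and $(A+BL)$ is stable. Applying $\mu$ generates the trajectory $x_k=(A+BL)^k x_0$, which stays in $X=\Re^n_+$ by feasibility, and the corresponding stage costs are $g(x_k,Lx_k)=(s+L'r)'x_k\geq 0$ by the nonnegativity consequence of Assumption~\ref{asm:2}, so $J_\mu(x_0)$ is a well-defined element of $[0,\infty]$. Stability of $(A+BL)$ (spectral radius strictly below one) makes the geometric operator series converge, giving
$$
J_\mu(x_0)=(s+L'r)'\sum_{k=0}^{\infty}(A+BL)^k x_0=(s+L'r)'(I-A-BL)^{-1}x_0<\infty.
$$
Since $\mu$ is a feasible stationary policy, $J^*(x_0)\leq J_\mu(x_0)<\infty$ for every $x_0\in X$, which is (i).

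The only points requiring care are the routine verifications in the reverse direction: confirming that feasibility of $L$ keeps the trajectory in $X$ and renders every stage cost nonnegative (so the infinite sum is meaningful), and that stability of $(A+BL)$ justifies replacing the limit of partial sums by $(I-A-BL)^{-1}$. Neither is a genuine obstacle given the consequences of Assumption~\ref{asm:2}; the substantive content of the corollary already resides in Prop.~\ref{prop:optimal_policy_norm}, on which the forward direction rests.
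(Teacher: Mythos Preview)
Your proposal is correct and follows essentially the same approach as the paper: invoke Prop.~\ref{prop:optimal_policy_norm} for (i)~$\Rightarrow$~(ii), and for (ii)~$\Rightarrow$~(i) evaluate $J_\mu$ for a stabilizing linear policy to get a finite upper bound on $J^*$. The only cosmetic difference is that the paper routes the inequality $J^*\leq J_\mu$ through Prop.~\ref{prop:classical}(a),(b), whereas you use it directly from the definition of $J^*$, which is equally valid and arguably more direct.
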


For this problem, the VI algorithm takes the form 
\begin{equation}
    \label{eq:vi_pos_norm}
    p_{k+1}=s+A'p_k-N'\Vert r+B'p_k\Vert_*.
\end{equation}
In what follows, we show that the VI algorithm \eqref{eq:vi_pos_norm} converges to $p^*$ starting from arbitrary $p_0\in \Re^n_+$.
\begin{proposition}
    \label{prop:vi_norm}
    Suppose that $J^*(x)$ is finite for all $x\in X$. Then the sequence $\{p_k\}$ generated by VI \eqref{eq:vi_pos_norm} with $p_0\in \Re^n_+$ converges to $p^*$.
\end{proposition}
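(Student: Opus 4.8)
The plan is to lift the vector recursion \eqref{eq:vi_pos_norm} into the function-space value iteration \eqref{eq:vi}, and then combine a monotone sandwich argument with the convergence result Prop.~\ref{prop:classical}(d) and the uniqueness in Prop.~\ref{prop:unique_norm}. First I would check that value iteration preserves linearity: if $J_k(x)=x'p_k$, then carrying out the minimization over $\|u\|\le Nx$ in \eqref{eq:vi} gives, using $Nx\ge 0$ for $x\in X=\Re^n_+$ and the definition of the dual norm, $\min_{\|u\|\le Nx}(r+B'p_k)'u=-Nx\,\|r+B'p_k\|_*$, whence $J_{k+1}(x)=x'\big(s+A'p_k-N'\|r+B'p_k\|_*\big)=x'p_{k+1}$ with $p_{k+1}$ exactly as in \eqref{eq:vi_pos_norm}. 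Evaluating at the unit vectors $e_i\in X$ then shows that $p_k\to p^*$ is equivalent to the pointwise convergence $J_k\to J^*$, where $J^*(x)=x'p^*$ by Prop.~\ref{prop:unique_norm}.

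I would next record two structural facts that follow from Assumption~\ref{asm:2}. The map $T(p)=s+A'p-N'\|r+B'p\|_*$ is monotone on $\Re^n_+$ in the elementwise order, because by the previous step it is the Bellman operator \eqref{eq:vi} restricted to linear functions, and the Bellman operator is monotone on $X=\Re^n_+$. Moreover $p^*>0$ componentwise, since the observability condition \eqref{eq:asum_norm_ob} forces $J^*(x)>0$ for every nonzero $x\ge 0$, so in particular $(p^*)_i=J^*(e_i)>0$.

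With these in hand I would sandwich the arbitrary initialization $p_0\in\Re^n_+$ between two controllable ones. For the upper bound, because $p^*>0$ I can pick $c>1$ with $cp^*\ge p_0$; setting $\overline p_0=cp^*$ gives $J^*=x'p^*\le x'\overline p_0\le c\,J^*$, so Prop.~\ref{prop:classical}(d) yields $\overline p_k\to p^*$. For the lower bound I take $\underline p_0=0\le p_0$; then $\underline p_1=T(0)=s-N'\|r\|_*\ge 0=\underline p_0$ by \eqref{eq:asum_norm}, so monotonicity of $T$ makes $\{\underline p_k\}$ nondecreasing, while $\underline p_0\le p^*$ and $T(p^*)=p^*$ give $\underline p_k\le p^*$ for all $k$. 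Hence $\{\underline p_k\}$ is nondecreasing and bounded above by $p^*$, so it converges to some $\hat p\le p^*$; continuity of $T$ (the dual norm is continuous) lets me pass to the limit in $\underline p_{k+1}=T(\underline p_k)$ to obtain $\hat p=T(\hat p)$, and the uniqueness in Prop.~\ref{prop:unique_norm} forces $\hat p=p^*$. Finally, monotonicity of $T$ propagates $\underline p_0\le p_0\le\overline p_0$ to $\underline p_k\le p_k\le\overline p_k$ for every $k$, and the squeeze gives $p_k\to p^*$.

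I expect the main obstacle to be the lower-bound half, since Prop.~\ref{prop:classical}(d) only governs initializations dominating $J^*$ and says nothing about starting below it. The delicate point is that the monotone limit from $\underline p_0=0$ is a priori known only to satisfy Bellman's equation and to lie below $p^*$, so identifying it with $p^*$ genuinely relies on the uniqueness established in Prop.~\ref{prop:unique_norm} (and, implicitly, on the strict positivity $p^*>0$ coming from the observability condition, which is also what makes the upper bound $cp^*\ge p_0$ constructible). Establishing the elementwise monotonicity of $T$ is the other load-bearing step, but it is immediate once linearity-preservation identifies $T$ with the restriction of the monotone Bellman operator to linear functions on $\Re^n_+$.
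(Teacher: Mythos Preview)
Your proposal is correct and follows essentially the same sandwich argument as the paper: bound $p_0$ between $\underline{p}_0=0$ and $\overline{p}_0=cp^*$ (using $p^*>0$), propagate via monotonicity of the Bellman map, and show both bounding sequences converge to $p^*$ using Prop.~\ref{prop:classical}(d) for the upper one and monotone convergence plus a fixed-point identification for the lower one. The only cosmetic difference is that for the lower sequence the paper simply refers back to the proof of Prop.~\ref{prop:unique_norm} (where this monotone limit is already shown to equal $p^*$ via Prop.~\ref{prop:classical}(a)), whereas you invoke continuity of $T$ and the uniqueness clause of Prop.~\ref{prop:unique_norm} directly; both routes are equivalent here.
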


Similar to the preceding section, PI and optimistic PI algorithms can also be defined for the norm bound problem. 
Assume that $J^*(x)$ is finite for all $x\in X$. Then in view of Prop.~\ref{prop:optimal_policy_norm}, there exists some $\mu^0(x)=L_0x$ such that $(A+BL_0)$ is stable. When applying PI algorithm, at a typical iteration $k$, given the policy $\mu^k(x)=L_kx$, the policy evaluation step takes the form 
\begin{equation}
    \label{eq:pi_eva_pos_norm}
    p_{\mu^k}'=(s+L_k'r)'(I-A-BL_k)^{-1}.
\end{equation}
The improved policy $\mu^{k+1}$ takes the form $\mu^{k+1}(x)=L_{k+1}x$ with $L_{k+1}$ given as 
\begin{equation}
    \label{eq:pi_imp_pos_norm}
    L_{k+1}=-w_kN
\end{equation}
where $w_k$ can be arbitrary element in $\mathcal{S}(r+B'p_{\mu^k})$, i.e., $w_k\in \mathcal{S}(r+B'p_{\mu^k})$. We have the following result for the PI algorithm.

\begin{proposition}
    \label{prop:pi_norm}
    Suppose that $J^*(x)$ is finite for all $x\in X$. Then there exists some $L_0$ such that $\mu^0(x)=L_0x$ is well-defined, i.e., $L_0x\in U(x)$ for all $x\in X$, and $(A+BL_0)$ is stable. Moreover, starting from $\mu^0(x)=L_0x$, the PI algorithm \eqref{eq:pi_eva_pos_norm} and \eqref{eq:pi_imp_pos_norm} is well-defined, i.e., $L_kx\in U(x)$ for all $x\in X$, $(A+BL_k)$ are stable, $k=1,2,\dots$, and $p_{\mu^k}$ converges to $p^*$, i.e., $p_{\mu^k}\to p^*$. In addition, the set of limit points of the sequence $\{L_k\}$ is a nonempty subset of $\mathcal{L}_n^*$.
\end{proposition}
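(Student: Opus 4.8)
The plan is to run the standard policy-iteration argument adapted to the positive-system structure, with the Frobenius--Perron theorem (Prop.~\ref{prop:f_p}) supplying the stability of the iterates. Throughout, write $c_k=s+L_k'r$ and $M_k=A+BL_k$ for the stage-cost vector and closed-loop matrix of $\mu^k(x)=L_kx$, and set $\widehat{A}=A-[\Vert B_1'\Vert_*\dots\Vert B_n'\Vert_*]'N$ and $\widehat{s}=s-N'\Vert r\Vert_*$, so that \eqref{eq:asum_norm_ob} reads $\widehat{s}'\sum_{i=0}^{n-1}\widehat{A}^i>0$ and \eqref{eq:asum_norm} gives $\widehat{A}\ge0$, $\widehat{s}\ge0$. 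The first observation is that feasibility of a linear policy, $\Vert Lx\Vert\le Nx$ for all $x\ge0$, forces each column $L^{(j)}$ to satisfy $\Vert L^{(j)}\Vert\le N_j$ (take $x=e_j$); combining this with $|B_iL^{(j)}|\le\Vert B_i'\Vert_*\Vert L^{(j)}\Vert$ and $|(L^{(j)})'r|\le\Vert L^{(j)}\Vert\Vert r\Vert_*$, together with $N\ge0$, yields the elementwise bounds $A+BL\ge\widehat{A}\ge0$ and $s+L'r\ge\widehat{s}\ge0$. Moreover, for the improved policies $L_{k+1}=-w_kN$ with $\Vert w_k\Vert\le1$, the same estimate gives $\Vert L_{k+1}x\Vert=\Vert w_k\Vert\,Nx\le Nx$, so every iterate stays feasible. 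The existence of a well-defined stabilizing $L_0$ is immediate from Cor.~\ref{coro:stabilizable_norm}.

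The crux, and what I expect to be the main obstacle, is a stability lemma: if a feasible linear policy $\mu(x)=Lx$ and a finite $p\in\Re^n_+$ satisfy $c+M'p\le p$ (with $c=s+L'r$, $M=A+BL$), then $M$ is stable. I would prove this by contradiction. By Prop.~\ref{prop:f_p} applied to $M\ge0$, its spectral radius $\lambda\ge0$ is attained with an eigenvector $u\in\Re^n_+$, $u\neq0$, $Mu=\lambda u$. Pairing $c+M'p\le p$ with $u\ge0$ and using $(M'p)'u=p'Mu=\lambda p'u$ gives $c'u\le(1-\lambda)p'u$. If $\lambda\ge1$ then the right side is nonpositive while $c'u\ge\widehat{s}'u\ge0$, so $\widehat{s}'u=0$. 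From $M\ge\widehat{A}$ and $u\ge0$ we get $\widehat{A}u\le Mu=\lambda u$, hence inductively $\widehat{A}^iu\le\lambda^iu$; combined with $\widehat{s}\ge0$, $\widehat{A}^iu\ge0$, and $\widehat{s}'u=0$ this forces $0\le\widehat{s}'\widehat{A}^iu\le\lambda^i\widehat{s}'u=0$ for every $i$, so $\widehat{s}'\big(\sum_{i=0}^{n-1}\widehat{A}^i\big)u=0$. But \eqref{eq:asum_norm_ob} makes $\widehat{s}'\sum_{i=0}^{n-1}\widehat{A}^i$ a strictly positive row vector, which pairs with the nonzero $u\ge0$ to give a strictly positive number, a contradiction. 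Hence $\lambda<1$. This is the one step where positivity, the Frobenius--Perron eigenvector, and the observability condition must be combined correctly.

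With the lemma in hand the induction is routine. Starting from a stabilizing $L_0$, whenever $M_k$ is stable the matrix $(I-M_k)$ is invertible, $p_{\mu^k}$ is finite, and $p_{\mu^k}\ge0$ since the stage cost is nonnegative. The improvement step minimizes $(s+A'p_{\mu^k})'x+(r+B'p_{\mu^k})'u$ over $\Vert u\Vert\le Nx$, so $c_{k+1}+M_{k+1}'p_{\mu^k}=Tp_{\mu^k}\le p_{\mu^k}$, where $Tp=s+A'p-N'\Vert r+B'p\Vert_*$ and the inequality holds because $\mu^k(x)=L_kx$ is one feasible choice attaining $p_{\mu^k}$. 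Applying the lemma with $p=p_{\mu^k}$ shows $M_{k+1}$ is stable, and Prop.~\ref{prop:classical}(b) with $\widehat{J}(x)=x'p_{\mu^k}$ gives $p_{\mu^{k+1}}\le p_{\mu^k}$. Thus $\{p_{\mu^k}\}$ is nonincreasing and bounded below by $p^*\ge0$ (as $J_{\mu^k}\ge J^*$), so it converges to some $p_\infty\ge p^*$. Since also $p_{\mu^{k+1}}=c_{k+1}+M_{k+1}'p_{\mu^{k+1}}\le c_{k+1}+M_{k+1}'p_{\mu^k}=Tp_{\mu^k}\le p_{\mu^k}$, letting $k\to\infty$ and using continuity of $T$ sandwiches $Tp_\infty=p_\infty$, so $p_\infty$ solves \eqref{eq:bellman_pos_norm} and, by the uniqueness in Prop.~\ref{prop:unique_norm}, $p_\infty=p^*$, i.e.\ $p_{\mu^k}\to p^*$.

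Finally, for the limit points of $\{L_k\}$: since $L_k=-w_{k-1}N$ with $\Vert w_{k-1}\Vert\le1$, the sequence is bounded, so its set of limit points is nonempty by Bolzano--Weierstrass. For any convergent subsequence $L_{k_j}\to\bar L$, pass to a further subsequence along which $w_{k_j-1}\to\bar w$ with $\bar L=-\bar wN$; I would then use $p_{\mu^{k_j-1}}\to p^*$ and the continuity of $\Vert\cdot\Vert$, $\Vert\cdot\Vert_*$, and the inner product to pass to the limit in the defining relations of $\mathcal{S}$. If $r+B'p^*\neq0$, then $r+B'p_{\mu^{k_j-1}}\neq0$ for large $j$, and the relations $\Vert w_{k_j-1}\Vert=1$ and $w_{k_j-1}'(r+B'p_{\mu^{k_j-1}})=\Vert r+B'p_{\mu^{k_j-1}}\Vert_*$ pass to $\bar w\in\mathcal{S}(r+B'p^*)$; if $r+B'p^*=0$, then $\Vert\bar w\Vert\le1$ already places $\bar w$ in $\mathcal{S}(0)$. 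Either way $\bar L=-\bar wN\in\mathcal{L}_n^*$, establishing the last claim. Unlike Prop.~\ref{prop:pi}, there is no finite sign structure here, so finite-step termination need not hold and only convergence of the limit-point set to $\mathcal{L}_n^*$ is asserted.
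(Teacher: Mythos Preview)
Your proof is correct and reaches the same conclusions, but it differs from the paper's in two substantive places.

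First, your stability lemma is a genuinely different argument. The paper (in the proof of Prop.~\ref{prop:pi}, invoked here by reference) shows $(A+BL_{k+1})$ is stable by first appealing to the generic improvement $J_{\mu^{k+1}}\le J_{\mu^k}<\infty$, then re-using the proof of Prop.~\ref{prop:optimal_policy}: finiteness of the infinite cost sum, combined with the strict positivity of $(s'+r'L)\sum_{i=0}^{n-1}(A+BL)^i$ and Prop.~\ref{prop:f_p}, forces the spectral radius of $(A+BL)^n$ below one. You instead work directly from the one-step inequality $c+M'p\le p$: pairing with a Frobenius--Perron eigenvector $u$ of $M$ and exploiting $M\ge\widehat{A}\ge0$, $c\ge\widehat{s}\ge0$ yields $\widehat{s}'\widehat{A}^iu=0$ for all $i$, contradicting \eqref{eq:asum_norm_ob}. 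Your route is cleaner in that it never touches the infinite cost sum and makes the role of the observability condition more transparent; the paper's route has the advantage of recycling an argument already written out for Prop.~\ref{prop:optimal_policy_norm}.

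Second, for $p_{\mu^k}\to p^*$ the paper sandwiches $p^*\le p_{\mu^k}\le\bar p_k$ against the VI sequence $\bar p_{k+1}=F(\bar p_k)$, $\bar p_0=p_{\mu^0}$, and invokes Prop.~\ref{prop:vi_norm}. You instead use the sandwich $p_{\mu^{k+1}}\le Tp_{\mu^k}\le p_{\mu^k}$ and continuity of $T$ to get $Tp_\infty=p_\infty$, then uniqueness from Prop.~\ref{prop:unique_norm}. Both are valid; yours is self-contained (no appeal to the VI result), while the paper's makes the relationship between PI and VI explicit. Your treatment of the limit points of $\{L_k\}$ matches the paper's essentially verbatim.
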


Note that when the norm used in \eqref{eq:control_norm_b} is $1$ norm or $\infty$ norm, with a more elaborate definition of PI algorithm so that a unique policy is specified at each step, it is possible to establish convergence of PI after a finite number of iterations. However, we keep the result here general as such results would require tedious complications for the presentation. To see that the policies generated by PI need not converge, we give an example.

\begin{example}\label{eg:scalar_norm}
Consider again the scalar problem in Example~\ref{eg:bellman_nonunique_norm} with $N$ modified as $0.05$. One can verify that the Assumption~\ref{asm:2} holds. The plot of Bellman's equation is given in Fig.~\ref{eg:scalar_norm}. It can be seen that there is a unique solution to Bellman's equation, which is $p^*=10$. The optimal stationary policy $\mu^*(x)$ can be obtained via
$$\mu^*(x)\in\arg\min_{u\in U(x)}\big\{x-10u+10(0.9x+u)\big\}.$$
The expression subject to minimization is not a function of $u$. Therefore, every policy $\mu(x)=Lx$ with $Lx\in U(x)$ for all $x$ achieves the minimum thus optimal. As a result, the sequence $\{p_{\mu^k}\}$ converges after the first iteration, yet the sequence $\{L_k\}$ does not converge.

\begin{figure}[ht]
    \centering
    \includegraphics[width=0.9\linewidth]{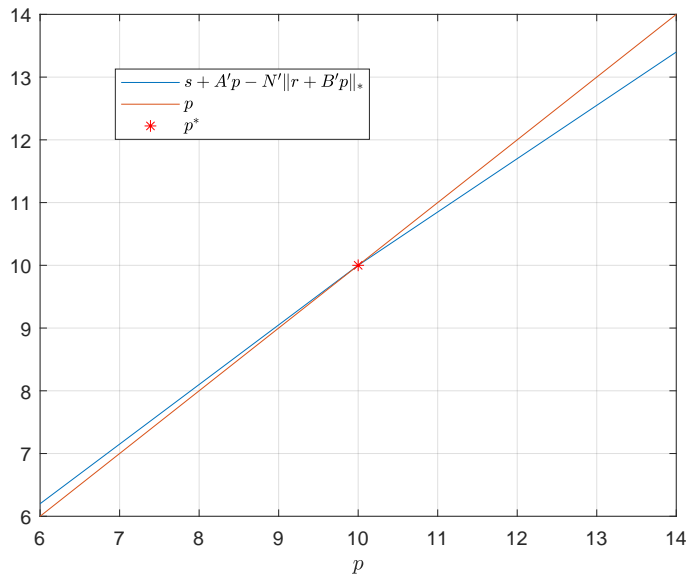}
    \caption{The Bellman curve and the solutions to the Bellman's equation for Example~\ref{eg:scalar_norm}. Note that $p^*$ is the intersecting point between $45^{\circ}$ and all the $L$-Bellman curves, namely all the linear policies are optimal.}
    \label{fig:scalar_norm}
\end{figure}
\end{example}

Similar to the absolute value bound problem, we can define an optimistic variant of PI for the norm bound problem. In particular, let $\{\ell_k\}$ be a sequence of positive integers. The optimistic PI starts with some $p_0$ such that 
\begin{equation}
    \label{eq:op_pi_initial_pos_norm}
    p_0\geq s+A'p_0-N'\Vert r+B'p_0\Vert_*.
\end{equation}
At a typical iteration $k$, given $p_k$, it defines a policy $\mu^k(x)=L_k(x)$ by
\begin{equation}
    \label{eq:op_pi_policy_norm}
    L_k=-w_kN,
\end{equation}
where $w_k\in \mathcal{S}(r+B'p_k)$ and obtain $p_{k+1}$ by
\begin{equation}
    \label{eq:op_pi_cost_norm}
    p_{k+1}'=p_k'(A+BL_k)^{\ell_k}+(s+L_k'r)'\sum_{i=0}^{\ell_k-1}(A+BL_k)^i.
\end{equation}
As in the preceding section, the optimistic PI is well-posed and the generated $p_k$ converges to $p^*$.

\begin{proposition}
    \label{prop:optimistic_pi_norm}
    Suppose that $J^*(x)$ is finite for all $x\in X$. Then there exists $p_0$ that satisfies the inequality \eqref{eq:op_pi_initial_pos_norm}, and the optimistic PI \eqref{eq:op_pi_policy_norm} and \eqref{eq:op_pi_cost_norm} is well-defined, i.e., $L_kx\in U(x)$ for all $x\in X$, $k=0,1,\dots$. Moreover, $(A+BL_k)$ are stable, $k=0,1,\dots$, and the generated $p_k$ converges to $p^*$, i.e., $p_k\to p^*$. In addition, the set of limit points of the sequence $\{L_k\}$ is a nonempty subset of $\mathcal{L}_n^*$.
\end{proposition}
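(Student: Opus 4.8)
The plan is to mirror the argument for the absolute value bound problem (Prop.~\ref{prop:optimistic_pi}), replacing the sign-based greedy formula by the subdifferential map $\mathcal{S}$ and weakening the conclusion on $\{L_k\}$ from finite termination to convergence of limit points, exactly as in the passage from Prop.~\ref{prop:pi} to Prop.~\ref{prop:pi_norm}. Throughout I write $F(p)=s+A'p-N'\Vert r+B'p\Vert_*$ for the map induced by the Bellman operator on linear cost functions, so that $J^*(x)=x'p^*$ with $p^*=F(p^*)$ by Prop.~\ref{prop:unique_norm}; and for a feasible linear policy $L=-wN$ with $\Vert w\Vert\leq 1$ I write $F_L(p)=s+L'r+(A+BL)'p$ for the corresponding single-policy operator, so that the cost update \eqref{eq:op_pi_cost_norm} reads $p_{k+1}=F_{L_k}^{\ell_k}(p_k)$. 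Two elementary facts underpin everything. First, for any $w$ with $\Vert w\Vert\leq 1$ the closed-loop matrix satisfies $A+BL\geq \underline A\geq 0$, where $\underline A=A-[\Vert B_1'\Vert_*\dots\Vert B_n'\Vert_*]'N$, and the running cost vector satisfies $s+L'r\geq \underline s\geq 0$, where $\underline s=s-N'\Vert r\Vert_*$; both follow from the dual-norm inequality and Assumption~\ref{asm:2}. Second, $F$ and each $F_L$ are monotone and continuous, $F\leq F_L$ pointwise, and the greedy choice in \eqref{eq:op_pi_policy_norm} gives $F_{L_k}(p_k)=F(p_k)$.

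First I would dispatch the easy items. A valid starting point exists since $p_0=p^*$ satisfies \eqref{eq:op_pi_initial_pos_norm} with equality. Well-definedness is direct: for $x\in\Re^n_+$ we have $\Vert L_kx\Vert=\Vert w_k\Vert\,Nx\leq Nx$ because $\Vert w_k\Vert\leq 1$ and $Nx\geq 0$, so $L_kx\in U(x)$. Next I would set up the monotone sandwich driving convergence of the costs. Applying Prop.~\ref{prop:classical}(a) to $\hat J=x'p_0$, the inequality \eqref{eq:op_pi_initial_pos_norm} forces $p_0\geq p^*$. I would then show by induction that $p^*\leq p_{k+1}\leq F(p_k)\leq p_k$ and $F(p_k)\leq p_k$: from $F_{L_k}(p_k)=F(p_k)\leq p_k$ and monotonicity the chain $\{F_{L_k}^i(p_k)\}_i$ is nonincreasing, giving $p_{k+1}\leq F(p_k)\leq p_k$; monotonicity with $F_{L_k}(p^*)\geq F(p^*)=p^*$ gives $p_{k+1}\geq p^*$; and $F(p_{k+1})\leq F_{L_k}(p_{k+1})=F_{L_k}^{\ell_k+1}(p_k)\leq p_{k+1}$ closes the induction. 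Thus $\{p_k\}$ is nonincreasing and bounded below by $p^*$, hence converges to some $p_\infty\geq p^*>0$; passing to the limit in $p_{k+1}\leq F(p_k)\leq p_k$ and using continuity of $F$ yields $F(p_\infty)=p_\infty$, so uniqueness in Prop.~\ref{prop:unique_norm} gives $p_\infty=p^*$.

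The main obstacle is the stability of each $M_k=A+BL_k$. Here I would combine the greedy inequality $M_k'p_k\leq p_k-(s+L_k'r)$ (a restatement of $F_{L_k}(p_k)\leq p_k$) with $p_k\geq p^*>0$ and the Frobenius--Perron theorem (Prop.~\ref{prop:f_p}) applied to the nonnegative matrix $M_k$. If $\rho(M_k)\geq 1$, Prop.~\ref{prop:f_p} supplies a nonzero $v\in\Re^n_+$ with $M_kv=\rho(M_k)v$; left-multiplying by $p_k'$ and invoking the greedy inequality gives $(\rho(M_k)-1)\,p_k'v\leq -(s+L_k'r)'v\leq 0$, and since $p_k'v>0$ this forces $\rho(M_k)=1$ and $(s+L_k'r)'v=0$. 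Using $M_kv=v$, hence $M_k^jv=v$, together with $s+L_k'r\geq\underline s\geq 0$ and $M_k^j\geq\underline A^j\geq 0$, I get $0=(s+L_k'r)'M_k^jv\geq \underline s'\underline A^jv\geq 0$ for every $j$, whence $\underline s'\sum_{i=0}^{n-1}\underline A^iv=0$ for a nonzero $v\geq 0$ --- contradicting the observability condition \eqref{eq:asum_norm_ob}. Therefore $\rho(M_k)<1$ for all $k$.

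Once this Frobenius--Perron-plus-observability step is in place, the final claim follows as in Prop.~\ref{prop:pi_norm}: the sequence $\{w_k\}$ lies in the unit ball, so $\{L_k\}=\{-w_kN\}$ is bounded and has a nonempty set of limit points. The closed-graph property of $\mathcal{S}$ --- namely, if $v_j\to v$ and $w_j\in\mathcal{S}(v_j)$ with $w_j\to w$ then $w\in\mathcal{S}(v)$, verified directly from \eqref{eq:subdifferential} using continuity of $\Vert\cdot\Vert_*$ --- combined with $r+B'p_k\to r+B'p^*$ shows that every limit point of $\{L_k\}$ has the form $-wN$ with $w\in\mathcal{S}(r+B'p^*)$, i.e., lies in $\mathcal{L}_n^*$, which is nonempty by Prop.~\ref{prop:optimal_policy_norm}. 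I expect the stability argument to be the crux, since the delicate point is upgrading the Lyapunov inequality $M_k'p_k\leq p_k$ (which alone yields only $\rho(M_k)\leq 1$) to strict stability via the observability bound relating $(M_k,s+L_k'r)$ to the worst-case pair $(\underline A,\underline s)$.
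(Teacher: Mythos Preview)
Your argument is correct, and it reaches the same conclusions as the paper, but it departs from the paper's proof in two places worth noting. For convergence of $\{p_k\}$, the paper sandwiches $p^*\leq p_k\leq \bar p_k$ with the auxiliary VI sequence $\bar p_{k+1}=F(\bar p_k)$, $\bar p_0=p_0$, and then appeals to Prop.~\ref{prop:vi_norm} to conclude $\bar p_k\to p^*$; you instead show directly that $\{p_k\}$ is nonincreasing and bounded below by $p^*$, pass to the limit in $p_{k+1}\leq F(p_k)\leq p_k$, and invoke uniqueness from Prop.~\ref{prop:unique_norm}. Your route is more self-contained since it does not rely on the VI convergence result. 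For stability of $A+BL_k$, the paper (via the reference to Props.~\ref{prop:pi} and \ref{prop:optimal_policy}) argues through finiteness of the policy cost: from $F_{L_k}(p_k)\leq p_k$ and Prop.~\ref{prop:classical}(b) it gets $J_{\mu^k}(x)\leq x'p_k<\infty$, and then repeats the infinite-sum plus Frobenius--Perron argument of Prop.~\ref{prop:optimal_policy_norm}. Your Lyapunov-style argument --- pairing the inequality $M_k'p_k\leq p_k-(s+L_k'r)$ with a Frobenius--Perron eigenvector $v\geq 0$ of $M_k$ and the observability bound $(s+L_k'r)'M_k^j\geq \underline s'\underline A^j$ --- is a genuinely different and more direct path that avoids the detour through $J_{\mu^k}$. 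The treatment of the limit points of $\{L_k\}$ via the closed-graph property of $\mathcal{S}$ coincides with the paper's.
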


As in the preceding case, the uniqueness result given in Prop.~\ref{prop:unique_norm} can also be used to formulate optimization programs for solving the problem. It takes the form 
\begin{subequations}
\label{eq:optimization}
    \begin{align}
	\max_{p,\gamma}& \quad 1'p\\
	\mathrm{s.\,t.} & \quad p\leq s+A'p-N'\gamma,\label{eq:optimization_eq}\\
	& \quad  \Vert r+B'p\Vert_*\leq \gamma, \label{eq:optimization_ineq}\\
	& \quad  p\in \Re^n_+,\;\gamma\in \Re_+.\label{eq:optimization_range}
	\end{align}
\end{subequations}
The exact expression of the constraint \eqref{eq:optimization_ineq} depends on the norm used in defining the control constraint set \eqref{eq:control_norm_b}. For example, when $1$ norm or $\infty$ norm is used, the constraint \eqref{eq:optimization_ineq} can be described by a collection of linear inequalities. When Euclidean norm is used, the inequality \eqref{eq:optimization_ineq} is quadratic in both $p$ and $\gamma$. Regardless of the type of the norm, the constraint \eqref{eq:optimization_ineq} specifies a closed convex set of $p$ and $\gamma$. The convexity allows us to assert the existence of some $p$ that attains the maximum in the optimization program \eqref{eq:optimization}. In fact, the convexity of the feasible region is inherited from the concavity of the Bellman operator in the cost functions for a variety of problems where the costs are accumulated over time; see \cite[p.~33]{bertsekas2022lessons}. This desirable property has been used to compute lower estimates of optimal cost functions of problems far beyond the scope of this work; see, e.g., \cite{rantzer2000piecewise}, \cite[Chapter~4]{boyd2013performance}.

For the optimization program, we have the following result. 

\begin{proposition}
    \label{prop:optimization}
    The following two statements are equivalent:
    \begin{itemize}
    \item[(i)] $J^*(x)$ is finite for all $x\in X$.
    \item[(ii)] The optimization program \eqref{eq:optimization} admits a finite optimal value $p^*$. Moreover, $J^*(x)=x'p^*$ for all $x\in X$.
\end{itemize}
\end{proposition}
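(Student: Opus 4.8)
The plan is to eliminate $\gamma$ from \eqref{eq:optimization} and work entirely in the parameter space $\Re^n_+$. Writing $\Psi(p):=s+A'p-N'\Vert r+B'p\Vert_*$ for the right-hand side of Bellman's equation \eqref{eq:bellman_pos_norm}, I would first observe that a pair $(p,\gamma)$ is feasible only if $p\le\Psi(p)$: since the elements of $N$ are nonnegative, constraints \eqref{eq:optimization_eq} and \eqref{eq:optimization_ineq} give $p\le s+A'p-N'\gamma\le s+A'p-N'\Vert r+B'p\Vert_*$, and conversely any $p\in\Re^n_+$ with $p\le\Psi(p)$ yields a feasible pair on taking $\gamma=\Vert r+B'p\Vert_*$. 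Hence the optimal value of \eqref{eq:optimization} equals $\sup\{1'p\mid p\in\Re^n_+,\ p\le\Psi(p)\}$, and it suffices to analyze this reduced program. The key structural fact I would record is that $\Psi$ is monotone on $\Re^n_+$: for $x\in X$ and $u\in U(x)$ the successor $f(x,u)=Ax+Bu$ lies in $X=\Re^n_+$, so for $0\le p\le q$ each term $s'x+r'u+(Ax+Bu)'p$ is nondecreasing in the parameter; taking the infimum over $u\in U(x)$ produces $x'\Psi(p)$ (cf. the value-iteration formula \eqref{eq:vi_pos_norm}) and preserves the ordering, giving $\Psi(p)\le\Psi(q)$.

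For (i)$\Rightarrow$(ii), assume $J^*$ is finite. By Prop.~\ref{prop:unique_norm} there is a unique $p^*\in\Re^n_+$ with $p^*=\Psi(p^*)$ and $J^*(x)=x'p^*$, and the pair $(p^*,\Vert r+B'p^*\Vert_*)$ is feasible with objective $1'p^*$. To bound every feasible point, I would fix any feasible $p$ and use $p\le\Psi(p)$ together with monotonicity to show by induction that the value-iteration sequence $p_k:=\Psi^k(p)$ is nondecreasing. Prop.~\ref{prop:vi_norm} guarantees $p_k\to p^*$, and a nondecreasing sequence with limit $p^*$ satisfies $p_k\le p^*$ for every $k$; in particular $p\le p^*$. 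Since the all-ones vector is strictly positive, $1'p\le 1'p^*$, with equality forcing $p=p^*$. Thus the optimal value is the finite number $1'p^*$, attained uniquely at $p^*$, and $J^*(x)=x'p^*$, as claimed.

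For (ii)$\Rightarrow$(i), I would first note that the reduced program is always feasible: $p=0$ with $\gamma=\Vert r\Vert_*$ satisfies the constraints precisely because $s\ge N'\Vert r\Vert_*$ in \eqref{eq:asum_norm}. Assuming the optimal value $V^*$ is finite, every feasible $p\ge0$ obeys $0\le p_i\le 1'p\le V^*$, so the reduced feasible set $F:=\{p\in\Re^n_+\mid p\le\Psi(p)\}$ is bounded; it is closed because $\Psi$ is continuous, hence compact, so $1'p$ attains its maximum at some $\bar p\in F$. I would then argue that $\bar p$ is a fixed point of $\Psi$: if $\bar p\le\Psi(\bar p)$ held with strict inequality in some component, then $\Psi(\bar p)\ge\bar p\ge0$ and, by monotonicity, $\Psi(\bar p)\le\Psi(\Psi(\bar p))$, so $\Psi(\bar p)\in F$ with $1'\Psi(\bar p)>1'\bar p$, contradicting maximality. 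Hence $\bar p=\Psi(\bar p)$ with $\bar p\in\Re^n_+$, and Prop.~\ref{prop:unique_norm} yields that $J^*$ is finite.

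The load-bearing step is the monotonicity of $\Psi$ on $\Re^n_+$, since both directions hinge on it; I expect its justification---which rests on the invariance $f(x,u)\in\Re^n_+$ guaranteed by Assumption~\ref{asm:2} rather than on any monotonicity of the dual norm itself---to be the main point requiring care. The remaining subtlety is the attainment of the maximum in the converse direction, which I would handle by deriving compactness of $F$ from the finiteness of the optimal value; the convexity of the constraint set noted after \eqref{eq:optimization} offers an alternative route but is not needed once compactness is in hand.
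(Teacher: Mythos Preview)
Your proof is correct and follows essentially the same route as the paper: reduce the program to $\{p\in\Re^n_+\mid p\le\Psi(p)\}$, use monotonicity of $\Psi$ (the Bellman operator $F$ in the paper's notation), and for (i)$\Rightarrow$(ii) bound every feasible $p$ by $p^*$ via the monotone VI iterates and Prop.~\ref{prop:vi_norm}. The only substantive difference is in (ii)$\Rightarrow$(i): the paper argues by contradiction and obtains attainment of the maximum by invoking convexity and closedness of the feasible region (adding a redundant bound on $\gamma$), whereas you bypass convexity by deducing compactness of the reduced feasible set directly from the finiteness of the optimal value; both then conclude via the same observation that a maximizer must be a fixed point of $\Psi$, and hence Prop.~\ref{prop:unique_norm} applies.
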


\section{Proof of the Main Results}\label{sec:proof}
In this section, we provide proof arguments for the theories developed earlier. Some principle ideas in the proof are inspired by those presented in \cite{bertsekas2015value} and are shared by both classes of problems. As a result, we give a detailed analysis of the absolute value bound problem and highlight the different arguments used for the norm bound problem. 

\subsection{Analysis for the Absolute Value Bound Problem}\label{sec:ab_v_b_analysis}
For convenience, let us introduce some notations, which would highlight the structural properties of the problem. We focus on the subset $\mathcal{L}_e\subset \Re^{m\times n}$ defined as
$$\mathcal{L}_e=\{L\in \Re^{m\times n}\,|\,|L|\leq E\}.$$
Then for every $L\in \mathcal{L}_e$, the function $Lx$ is a policy, i.e., $Lx\in U(x)$ for all $x\in X$. Clearly, when $p^*$ is finite, we have $\mathcal{L}_e^*$ well-defined and $\mathcal{L}_e^*\subset \mathcal{L}_e$.

For every $L\in \mathcal{L}_e$, we introduce $L$-Bellman operator $G_L:\Re^n_+\mapsto\Re^n_+$ defined as
$$G_L(p)=s+L'r+(A+BL)'p.$$
Moreover, we introduce Bellman operator $G:\Re^n_+\mapsto\Re^n_+$ defined as
$$G(p)=s+A'p-E'|r+B'p|.$$
It can be verified that for all $x\in \Re^n_+$,
$$G(p)x=\inf_{L\in \mathcal{L}_e}G_L(p)x.$$
Besides, for every $p\in \Re_+^n$, there exists some $L\in \mathcal{L}_e$, such that 
\begin{equation}
    \label{eq:op_policy_G_L}
    G(p)=G_L(p). 
\end{equation}
In fact, one such matrix $L$ is given in explicit form as
\begin{equation}
    \label{eq:greedy_policy}
    L=-\begin{bmatrix}
\text{sign}(r_1+b_1'p)E_1\\
\vdots\\
\text{sign}(r_m+b_m'p)E_m
\end{bmatrix};
\end{equation}
cf. Eq.~\eqref{eq:optimal_policy_bar}. The $\ell$-fold compositions of $G_L$ and $G$ are denoted as $G_L^\ell$ and $G^\ell$, respectively, with $G^0_L(p)=G^0(p)=p$ by convention. 

As their names suggest, those are the ordinary Bellman operators restricted to the linear functions $J(x)=x'p$ and the linear policies $\mu(x)=Lx$. As a result, they inherit the monotonicity property from the general Bellman operators. In particular, for $p,\Bar{p}\in \Re_n^+$, if $p\leq\Bar{p}$, then
$$G_L(p)\leq G_L(\Bar{p}),\;\forall L\in \mathcal{L}_e,\quad G(p)\leq G(\Bar{p}).$$
Moreover, the operator $G$ is continuous, i.e., $p_k\to p$ implies that $G(p_k)\to G(p)$.

Now we are ready to prove the results stated in the previous section.

\begin{proof}[Proof for Prop.~\ref{prop:unique}]
Suppose that $J^*(x)$ is finite for all $x\in X$. Consider the sequence of functions $\{J_k\}$ generated by VI [cf. Eq.~\eqref{eq:vi}] with $J_0\equiv0$. Straightforward calculation shows that $J_k(x)=x'p_k$ for $p_k\in \Re_+^n$, $p_{k+1}=G(p_k)$, and $p_0=0$. In particular, we have
\begin{equation}
\label{eq:unique_p_1_ineq}
    p_1=s+A'p_0-E'|r+B'p_0|=s-E'|r|\geq 0
\end{equation}
by \eqref{eq:asum} in Assumption~\ref{asm:1}. Then by the monotonicity of $G$, we have that $\{p_k\}$ monotonically increasing and thus nonnegative, therefore 
\begin{equation}
    \label{eq:unique_p_k_ineq}
    \begin{aligned}
    p_{k+1}=&s+A'p_k-E'|r+B'p_k|\\
    \geq&s+A'p_k-E'|r|-E'|B'|p_k\\
    =&s-E'|r|+(A'-E'|B'|)p_k,
\end{aligned}
\end{equation}
where the first inequality is due to triangular inequality, $E\geq0$ and $p_k\geq 0$. Together with \eqref{eq:unique_p_1_ineq}, this yields
$$p_{k}'\geq(s'-|r'|E)\sum_{i=0}^{k-1}(A-|B|E)^i.$$
Due to \eqref{eq:asum_ob}, we have $p_n>0$. Since $J^*(x)$ is finite, then there exists some $p^*$ such that $p_k\to p^*$. Since $p_{k+1}=G(p_k)$, taking limits on both sides yields $p^*=G(p^*)$, or equivalently,
$$J_\infty(x)=\inf_{u\in U(x)}\big\{g(x,u)+J_\infty\big(f(x,u)\big)\big\},$$
where $J_\infty(x)=x'p^*$. By Prop.~\ref{prop:classical}(a), we have that $J_\infty\geq J^*$. However, $J_\infty(x)$ is the limit of a monotone sequence $x'p_k$, which is upper bounded by $J^*(x)$. Therefore, $J^*(x)=x'p^*$. Let $\Bar{p}$ be a solution to the equation $p=G(p)$. Then by Prop.~\ref{prop:classical}(a), $p^*\leq \Bar{p}$. In addition, there exists some positive $c>1$ such that $\Bar{p}\leq cp^*$ as $p^*\geq p_n>0$. Consider the sequences $\{\Tilde{p}_k\}$ and $\{\Bar{p}_k\}$ generated by VI with $\Tilde{p}_0=cp^*$ and $\Bar{p}_0=\Bar{p}$. Then by the monotonicity of $G$, $\Bar{p}_k\leq \Tilde{p}_k$. However, $\Bar{p}_k=\Bar{p}$ as $\Bar{p}_k=G(\Bar{p}_k)$ and $\Tilde{p}_k\to p^*$ due to Prop.~\ref{prop:classical}(d). Therefore, the solution to $p=G(p)$ is unique. 

Conversely, let $p^*$ be the unique solution of $p=G(p)$. Then $J^*(x)\leq x'p^*$ for all $x\in X$ by Prop.~\ref{prop:classical}(a).
\end{proof}

\begin{proof}[Proof of Prop.~\ref{prop:optimal_policy}]
    Since $J^*(x)$ is finite for all $x\in X$, then by Prop.~\ref{prop:unique}, $J^*(x)=x'p^*$ where $p^*$ is the unique solution to $p=G(p)$ within $\Re^n_+$. Then the set $\mathcal{L}_e^*$ given in \eqref{eq:optimal_policy_bar} is well-defined, nonempty, and is a subset of $\mathcal{L}_e$. In fact, we have $G_{L^*}(p^*)=p^*$ for every $L^*\in \mathcal{L}_e^*$. Define $\mu^*(x)=L^*x$. Then we have that 
    $$\mu^*(x)\in\arg\min_{u\in U(x)}\big\{s'x+r'u+J^*\big(f(x,u)\big)\big\},\quad \forall x\in X.$$
    By Prop.~\ref{prop:classical}(c), $\mu^*$ is optimal, i.e.,
    \begin{equation}
        \label{eq:stable_cost}
        \lim_{\ell\to\infty}(s'+r'L^*)\sum_{i=0}^{\ell}(A+BL^*)^ix=J^*(x)<\infty,\quad \forall x\in X=\Re_+^n.
    \end{equation}
    The definition of the set $\mathcal{L}_e$ yields that $(A+BL^*)\geq (A-|B|E)\geq0$, which implies that
    $$(A+BL^*)^i\geq (A-|B|E)^i\geq0,\quad i=1,2,\dots.$$
    Similarly, $s+(L^*)'r\geq (s- E'|r|)\geq0$. As a result, we have that
    \begin{equation}
    \label{eq:stable_positive}
        (s'+r'L^*)\sum_{i=0}^{n-1}(A+BL^*)^i\geq (s'- |r'|E)\sum_{i=0}^{n-1}(A-|B|E)^i>0.
    \end{equation}
    By rewriting the cost \eqref{eq:stable_cost}, we have
    \begin{align*}
        &\lim_{\ell\to\infty}(s'+r'L^*)\sum_{i=0}^{\ell}(A+BL^*)^ix\\
        =&(s'+r'L^*)\sum_{i=0}^{n-1}(A+BL^*)^i\lim_{\ell\to\infty}\sum_{j=0}^{\ell}(A+BL^*)^{nj}x
    \end{align*}
    where the equality is justified as the limit of a subsequence of a convergent sequence equals the limit of the sequence. In view of \eqref{eq:stable_cost}, \eqref{eq:stable_positive}, and Prop.~\ref{prop:f_p}, the largest absolute value of the eigenvalues of $(A+BL^*)^n$ is less than $1$, namely $(A+BL^*)$ is stable.
\end{proof}

\begin{proof}[Proof of Corollary~\ref{coro:stabilizable}]
    Prop.~\ref{prop:optimal_policy} shows that (i) implies (ii). To see it holds conversely, let $\mu(x)=Lx$ where $L\in \mathcal{L}_e$ and $(A+BL)$ is stable. Then
    $$J_{\mu}(x)=\lim_{\ell\to\infty}(s'+r'L)\sum_{i=0}^{\ell}(A+BL)^ix<\infty,\quad \forall x\in X=\Re_+^n.$$
    As a result, we have 
    \begin{equation}
    \label{eq:coro1}
        \inf_{u\in U(x)}\big\{g(x,u)+J_{\mu}\big(f(x,u)\big)\big\}\leq g\big(x,\mu(x)\big)+J_{\mu}\Big(f\big(x,\mu(x)\big)\Big)=J_{\mu}(x),
    \end{equation}
    where the equality is due to Prop.~1(b). Putting the first and the last term of Eq.~\eqref{eq:coro1} and in view of Prop.~1(a), we have $J^*(x)\leq J_\mu(x)<\infty$ for all $x$.
\end{proof}

\begin{proof}[Proof of Prop.~\ref{prop:vi}]
    Let $\{p_k\}$ be a sequence generated by VI with $p_0\in \Re^n_+$. Then there exists some $c>1$ such that $p_0\leq cp^*$, where $p^*$ fulfills that $J^*(x)=x'p^*$. Consider the sequences $\{\underline{p}_k\}$ and $\{\overline{p}_k\}$ generated by VI with $\underline{p}_0=0$ and $\overline{p}_0=cp^*$, respectively. Then $\underline{p}_0\leq p_0\leq \overline{p}_0$. By the monotonicity of $G$, we have 
    $$\underline{p}_k\leq p_k\leq \overline{p}_k,\quad k=0,1,\dots.$$
    However, as is argued in the proof for Prop.~\ref{prop:unique}, $\underline{p}_k\to p^*$ and $\overline{p}_k\to p^*$. As a result, we have that $p_k\to p^*$.
\end{proof}

\begin{proof}[Proof of Prop.~\ref{prop:pi}]
    First, we note that the sequence of functions $\{J_{\mu^k}\}$ generated by PI for addressing the generic nonnegative cost problems are monotonically decreasing. This can be shown through the following inequalities:
    \begin{align*}
        J_{\mu^k}(x)=&g\big(x,\mu^k(x)\big)+J_{\mu^k}\Big(f\big(x,\mu(x)\big)\Big)\\
        \geq &\inf_{u\in U(x)}\big\{g(x,u)+J_{\mu^k}\big(f(x,u)\big)\big\}\\
        = &g\big(x,\mu^{k+1}(x)\big)+J_{\mu^k}\Big(f\big(x,\mu(x)\big)\Big),
    \end{align*}
    where the first equality is due to Prop.~\ref{prop:classical}(b) and the second equality is due to the definition of $\mu^{k+1}$; cf. Eq.~\eqref{eq:policy_im}. Then applying Prop.~\ref{prop:classical}(b) with $J_{\mu^k}$ in place of $\hat{J}$, we have that $J_{\mu^{k+1}}\leq J_{\mu^k}$.

    When applying PI to the positive systems, the existence of $L_0\in\mathcal{L}_e$ such that $(A+BL_0)$ is stable is assured in view of the assumption $J^*<\infty$ and Prop.~\ref{prop:optimal_policy}. Therefore, the cost function of $\mu^0$ is given as $J_{\mu^0}(x)=x'p_{\mu^0}$ with $p_{\mu^0}=(s+L_0'r)'(I-A-BL_0)^{-1}$. To see that the PI algorithm \eqref{eq:pi_eva_pos} and \eqref{eq:pi_imp_pos} is well-posed, we apply induction. Assume that $(A+BL_k)$ is stable and $J_{\mu^k}(x)=x'p_{\mu^k}$. By the generic cost improvement property $J_{\mu^{k+1}}\leq J_{\mu^k}$, we have that 
    $$J_{\mu^{k+1}}(x)=\lim_{\ell\to\infty}(s'+r'L_{k+1})\sum_{i=0}^{\ell}(A+BL_{k+1})^ix\leq J_{\mu^k}(x)<\infty,\quad \forall x\in X=\Re_+^n.$$
    The definition of the set $\mathcal{L}_e$ yields that $(A+BL_{k+1})\geq0$. Then by the same arguments applied in the proof of Prop.~\ref{prop:optimal_policy}, we have that $(A+BL_{k+1})$ is stable. Therefore, $J_{\mu^k}(x)=x'p_{\mu^k}$ for some $p_k\in \Re^n_+$, $k=0,1,\dots$, and $J_{\mu^{k+1}}\leq J_{\mu^k}$ implies that $\{p_{\mu^k}\}$ is decreasing, which is convergent as it is lower bounded by $p^*$.

    To see that the PI converges to the optimal after a finite number of iterations, we note that every $L_k$, $k=1,2,\dots$, must take the form \eqref{eq:greedy_policy}, which can be as many as $2^m$ in the event that $E_i\neq 0$ for all $i=1,\dots,m$. If $p_{\mu^{k+1}}\neq p_{\mu^k}$, then clearly $L_{k+1}\neq L_k$. If $p_{\mu^{k+1}}= p_{\mu^k}$, then we have that 
    \begin{equation}
        \label{eq:pi_imp_proof}
        G_{L_{k+1}}(p_{\mu^{k+1}})=G_{L_{k+1}}(p_{\mu^k})=G(p_{\mu^k})=G(p_{\mu^{k+1}})\leq p_{\mu^k}=p_{\mu^{k+1}}.
    \end{equation}
    However, in view of Prop.~\ref{prop:classical}(b), the equality $G_{L_{k+1}}(p_{\mu^{k+1}})=p_{\mu^{k+1}}$ holds, which combined with Eq.~\eqref{eq:pi_imp_proof}, implies that
    $$G(p_{\mu^{k+1}})=p_{\mu^{k+1}}.$$
    Since $G(p)=p$ admits a unique solution within $\Re^n_+$ due to Prop.~\ref{prop:unique}, then $p_{\mu^{k+1}}=p^*$. Moreover, all the subsequent policies $\mu^{\ell}$, $\ell=k+1,k+2,\dots$, and the equalities $p_{\mu^{\ell+1}}=p_{\mu^{\ell}}$ hold. This, in turn, implies that for the sequence $\{p_{\mu^k}\}_{k=0}^{\Bar{k}}$ where $\Bar{k}=\min\{\ell\,|\,p_{\mu^{\ell+1}}= p_{\mu^\ell}\}$, we have that $p_{\mu^{k+1}}\neq p_{\mu^k}$, so the policies generated in PI are not repeated, until after some $\Bar{k}$. Since there can be at most $2^m$ generated policies, the PI converges finitely.   
\end{proof}

\begin{proof}[Proof of Prop.~\ref{prop:optimistic_pi}]
    The existence of $p_0\in \Re^n_+$ that fulfills the inequality $p_0\geq G(p_0)$ is ensured in view of Prop.~\ref{prop:unique} as $J^*<\infty$. The well-posedness of optimistic PI can be shown in a similar way in which the well-posedness of PI is established. Therefore, we omit this part of the proof.

    For the convergence of optimisitic PI, we consider an auxiliary sequence $\{\overline{p}_k\}$ with $\overline{p}_0=p_0$ and $\overline{p}_{k+1}=G(\overline{p}_k)$. Those two sequences define sequences of functions $\{J_k\}$ and $\overline{J}_k$ via $J_k(x)=x'p_k$ and $\overline{J}_k(x)=x'\overline{p}_k$. In what follows, we will show the inequalities 
    \begin{equation}
        \label{eq:op_pi_inequality}
        p^*\leq p_k\leq \overline{p}_k,\quad k=0,1,\dots
    \end{equation}
    by induction. Since $G(p_0)\leq p_0$, then by Prop.~\ref{prop:classical}(a) $p^*\leq p_0= \overline{p}_0$. Moreover, $G(p_0)\leq p_0$ and $G(\overline{p}_0)\leq \overline{p}_0$ Suppose $p^*\leq p_k\leq \overline{p}_k$, $G(p_k)\leq p_k$ and $G(\overline{p}_k)\leq \overline{p}_k$. By monotonicity of $G$, $p_k\leq \overline{p}_k$ and $G(\overline{p}_k)\leq \overline{p}_k$ imply that $G(p_k)\leq G(\overline{p}_k)=\overline{p}_{k+1}$ and $G^2(\overline{p}_k)\leq G(\overline{p}_k)=\overline{p}_{k+1}$, respectively. However, $G^2(\overline{p}_k)=G\big(G(\overline{p}_k)\big)=G(\overline{p}_{k+1})$, which yields that $G(\overline{p}_{k+1})\leq \overline{p}_{k+1}$. By definition of $L_{k+1}$, we also have that $G_{L_{k+1}}(p_k)=G(p_k)\leq p_k$. Due to the monotonicity of $G_{L_{k+1}}$, we have that 
    $$G_{L_{k+1}}^{\ell_k+1}(p_k)\leq G_{L_{k+1}}^{\ell_k}(p_k)\leq G_{L_{k+1}}(p_k)=G(p_k)\leq p_k.$$
    In view of the definition of $p_{k+1}=G_{L_{k+1}}^{\ell_k}(p_k)$ [cf. Eq.~\eqref{eq:op_pi_cost}] and the operator $G$, we have that 
    $$G(p_{k+1})\leq G_{L_{k+1}}(p_{k+1})\leq p_{k+1}\leq G(p_k)\leq p_k.$$
    Moreover, the inequalities $G_{L_{k+1}}(p_{k+1})\leq p_{k+1}$ implies that $J_{\mu}(x)\leq x'p_{k+1}$ where $\mu(x)=L_{k+1}x$ due to Prop.~\ref{prop:classical}(b). As a result, $x'p_{k+1}\geq x'p^*$, or equivalently, $p_{k+1}\geq p^*$.\footnote{Note that for the generic nonnegative cost problems, let $J_0=\overline{J}_0$ such that inequality
    $$\inf_{u\in U(x)}\big\{g(x,u)+J_0\big(f(x,u)\big)\big\}\leq J_0(x),\quad \forall x\in X$$
    holds. The existence of such function is no issue as one may set $J_0=\infty$. Let $\{J_k\}$ and $\{\overline{J}_k\}$ be the sequences generated by optimistic PI and VI respectively, then the inequality 
    $$J^*\leq J_k\leq \overline{J}_k,\quad k=0,1,\dots,$$
    which is the generalization of Eq.~\eqref{eq:op_pi_inequality}, remains valid, and can be proved using nearly identical arguments applied above. This is expected, as this part of our proof relies entirely on the monotonicity of $G$ and $G_L$, which are inherited from the generic Bellman operators for nonnegative cost problems.}

    For the convergence of $\{p_k\}$, by Prop.~\ref{prop:vi}, $\overline{p}_k\to p^*$. Then in view of the inequality \eqref{eq:op_pi_inequality}, we have that $p_k\to p^*$. To see the finite convergence towards the optimal policy, let us define index set $\mathcal{I}$ as 
    $$\mathcal{I}=\{i\,|\,r_i+b_i'p^*\neq0\}.$$
    Since $p_k\to p^*$, $r_i+b_i'p_k\to r_i+b_i'p^*$, $i=1,\dots,n$. As a result, there exists some $\Bar{k}$ such that $\text{sign}(r_i+b_i'p_k)=\text{sign}(r_i+b_i'p^*)$ for all $k\geq \Bar{k}$ and $i\in \mathcal{I}$. Then it can be seen that the subsequent policies are all optimal, i.e., $L_k\in\mathcal{L}_e^*$ for all $k\geq \Bar{k}$.
\end{proof}

\begin{proof}[Proof of Prop.~\ref{prop:linear}]
    First, we note that the constraints \eqref{eq:linear_eq}, \eqref{eq:linear_ineq}, and \eqref{eq:linear_range} are equivalent to $p\leq G(p)$ and $p\in \Re^n_+$. As a result, the sequence $\{G^k(p)\}$ is monotonically increasing. Moreover, if $\hat{p}\in \Re_+^n$ attains the maximum of the linear program \eqref{eq:linear}, then $\hat{J}(x)=x'\hat{p}$ solves Bellman's equation, i.e.,
    $$\hat{J}(x)=\inf_{u\in U(x)}\big\{g(x,u)+\hat{J}\big(f(x,u)\big)\big\},\quad \forall x\in X.$$

    Suppose that $J^*<\infty$. Then by Prop.~\ref{prop:unique}, $J^*(x)=x'p^*$ with $p^*\in \Re^n_+$. For any $p\in \Re^n_+$ such that $p\leq G(p)$, in view of Prop.~\ref{prop:vi}, $G^k(p)\to p^*$. Since $\{G^k(p)\}$ is monotonically increasing, then $p\leq p^*$. Therefore, every feasible $p$ of the linear program is bounded above by $p^*$.

    Suppose that $J^*(\hat{x})=\infty$ for some $\hat{x}\in X$. Assume that the linear program \eqref{eq:linear} is bounded. Let $\hat{p}\in \Re_+^n$ attain the maximum of the linear program \eqref{eq:linear} (the existence of such $\hat{p}$ is assured, see, e.g., \cite{bertsimas1997introduction}). As discussed above, $\hat{J}(x)=x'\hat{p}$ solves Bellman's equation. This implies that $\hat{x}'\hat{p}\geq J^*(\hat{x})$, which is a contradiction.  
\end{proof}

\subsection{Analysis for the Norm Bound Problem}\label{sec:norm_b_analysis}
Similar to the notations introduced in Section~\ref{sec:ab_v_b_analysis}, we introduce the subset $\mathcal{L}_n\subset \Re^{m\times n}$ defined as
$$\mathcal{L}_n=\{L\in \Re^{m\times n}\,|\,L=-wN,\;\Vert w\Vert\leq 1\}.$$ 
In addition, for every $L\in \mathcal{L}_n$, we introduce $L$-Bellman operator $F_L:\Re^n_+\mapsto\Re^n_+$ defined as
$$F_L(p)=s+L'r+(A+BL)'p.$$
Moreover, we introduce Bellman operator $F:\Re^n_+\mapsto\Re^n_+$ defined as
$$F(p)=s+A'p-N'\Vert r+B'p\Vert_*.$$
The $\ell$-fold compositions of $F_L$ and $F$ are denoted as $F_L^\ell$ and $F^\ell$, respectively, with $F^0_L(p)=F^0(p)=p$ by convention. Identical observations following Assumption~\ref{asm:1} with regards to $G_L$ and $G$ can be made for $F_L$ and $F$, respectively. Now we are ready to prove the main results for this class of problems.

\begin{proof}[Proof for Prop.~\ref{prop:unique_norm}]
The proof can be obtained by using nearly identical arguments in the proof for Prop.~\ref{prop:unique} with $F$ in place of $G$. Here we provide the algebraic manipulations used to show that $p^*>0$. 

For the VI starting from $p_0=0$, we have that 
$$p_{1}=s+A'p_0-N'\Vert r+B'p_0\Vert_*=s-N'\Vert r\Vert_*\geq 0,$$
and
\begin{align*}
    p_{k+1}=&s+A'p_k-N'\Vert r+B'p_k\Vert_*\\
    \geq& s+A'p_k-N'\Vert r\Vert_*-N'[\Vert B_1'\Vert_*\dots \Vert B_n'\Vert_*]p_k\\
    =& s-N'\Vert r\Vert_*+\big(A'-N'[\Vert B_1'\Vert_*\dots \Vert B_n'\Vert_*]\big)p_k.
\end{align*}
The inequality is used to show that $p_n>0$, which yields $p^*>0$.
\end{proof}

\begin{proof}[Proof of Prop.~\ref{prop:optimal_policy_norm}]
    The proof can be obtained by using nearly identical arguments in the proof for Prop.~\ref{prop:optimal_policy} with $G$, $G_{L^*}$, $\mathcal{L}_e$, and $\mathcal{L}^*_e$ replaced by their counterparts. As in the proof above, we provide the algebraic steps leading to the stability arguments.

    The definition of the set $\mathcal{L}_n$ yields that 
    $$A+BL^*\geq \big(A-[\Vert B_1'\Vert_*\dots \Vert B_n'\Vert_*]'N\big)\geq0$$
    which implies that
    $$(A+BL^*)^i\geq \big(A-[\Vert B_1'\Vert_*\dots \Vert B_n'\Vert_*]'N\big)^i\geq0,\quad i=1,2,\dots.$$
    Similarly, $s+(L^*)'r\geq (s- N'\Vert r\Vert_*)\geq0$. As a result, we have that
    \begin{align*}
        &(s'+r'L^*)\sum_{i=0}^{n-1}(A+BL^*)^i\\
        \geq& (s'-\Vert r\Vert_*N')\sum_{i=0}^{n-1}\big(A- [\Vert B_1'\Vert_*\dots \Vert B_n'\Vert_*]'N\big)^i>0,
    \end{align*}
    which leads to the stability of $(A+BL^*).$
\end{proof}

\begin{proof}[Proof of Corollary~\ref{coro:stabilizable_norm}]
    Identical arguments in the proof of Corollary~\ref{coro:stabilizable} can be used here once $\mathcal{L}_e$ is replaced by $\mathcal{L}_n$. 
\end{proof}

\begin{proof}[Proof of Prop.~\ref{prop:vi_norm}]
    The proof can be obtained by using identical arguments in the proof for Prop.~\ref{prop:vi} with $F$ in place of $G$. 
\end{proof}

\begin{proof}[Proof of Prop.~\ref{prop:pi_norm}]
    Convergence of the sequence $\{p_{\mu^k}\}$, the well-posedness of the PI algorithm, and the stability of matrices $(A+BL_k)$ can be shown by using the identical arguments of those in the related parts of the proof of Prop.~\ref{prop:pi} with $\mathcal{L}_n$ in place of $\mathcal{L}_e$.  

    To see that $\{p_{\mu^k}\}$ converges to $p^*$, we consider the auxiliary sequence $\{\Bar{p}_k\}$ with $\Bar{p}_0=p_{\mu^0}$, and $\Bar{p}_{k+1}=F(\Bar{p}_k)$. We will show by induction that
    $$p_{\mu^k}\leq \Bar{p}_k,\quad k=0,1,\dots.$$
    The inequality holds for $0$ in view of the definition of $\Bar{p}_0$. Suppose that $p_{\mu^k}\leq \Bar{p}_k$ with $\mu^k(x)=L_kx$. Then by the monotonicity of $F$, we have that 
    \begin{equation}
        \label{eq:pi_norm_ineq4}
        F(p_{\mu^k})\leq F(\Bar{p}_k)=\Bar{p}_{k+1}.
    \end{equation}
    In addition, we have the inequality 
    \begin{equation}
        \label{eq:pi_norm_ineq}
        F_{L_{k+1}}(p_{\mu^k})=F(p_{\mu^k})\leq F_{L_k}(p_{\mu^k})=p_{\mu^k},
    \end{equation}
    where $\mu^{k+1}(x)=L_{k+1}x$, the first equality is due to the definition of $L_{k+1}$, cf. \eqref{eq:pi_imp_pos_norm}, the inequality is due to the relations between $F$ and $F_{L_k}$, and the second equality is due to Porp.~\ref{prop:classical}(b). Putting the first and the last term of \eqref{eq:pi_norm_ineq} together gives
    \begin{equation}
        \label{eq:pi_norm_ineq_1}
        F_{L_{k+1}}(p_{\mu^k})\leq p_{\mu^k}.
    \end{equation}
    Applying on both sides of $F_{L_{k+1}}$ and in view of the monotonicity of $F_{L_{k+1}}$, we have 
    $$F_{L_{k+1}}\big(F_{L_{k+1}}(p_{\mu^k})\big)\leq F_{L_{k+1}}(p_{\mu^k}),$$
    or equivalently,
    $$s'x+r'L_{k+1}x+\hat{J}\big((A+BL_{k+1})x\big)\leq \hat{J}(x),\quad \forall x\in X,$$
    with $\hat{J}(x)=x'F_{L_{k+1}}(p_{\mu^k})$. Then in view of Prop.~\ref{prop:classical}(b), we have that
    \begin{equation}
        \label{eq:pi_norm_ineq2}
        p_{\mu^{k+1}}\leq F_{L_{k+1}}(p_{\mu^k}).
    \end{equation}
    Combining \eqref{eq:pi_norm_ineq4} and \eqref{eq:pi_norm_ineq2} yields that $p_{\mu^{k+1}}\leq \Bar{p}_{k+1}$. Since $\Bar{p}_{k}\to p^*$ by Prop.~\ref{prop:vi_norm} and $p_{\mu^{k}}\geq p^*$ for all $k$ by definition, we have that $p_{\mu^{k}}\to p^*$.

    By the definition of $L_k$, cf. \eqref{eq:pi_imp_pos_norm}, the sequence $\{L_k\}$ is bounded. Therefore the set of its limit points is nonempty. If $r+B'p^*=0$, the claim that its limit points belong to $\mathcal{S}(r+B'p^*)$ holds trivially. Otherwise, recall that $L_{k+1}=-w_kN$ with $w_k\in \mathcal{S}(r+B'p_{\mu^k})$, and we will show that the limit points of $\{w_k\}$ belong to $\mathcal{S}(r+B'p^*)$. Let us consider a subsequence of $w_k$ indexed by $\mathcal{K}$ such that $\{w_k\}_{k\in \mathcal{K}}$ is convergent with limit $\Bar{w}$. Since $r+B'p^*\neq0$, then $r+B'p_{\mu^k}\neq 0$ for sufficiently large $k$. As a result, $\Vert w_k\Vert =1$ for large $k$, which implies that $\Vert \Bar{w}\Vert=1$. Since $w_k\in \mathcal{S}(r+B'p_{\mu^k})$, we have that 
    \begin{equation}
        \label{eq:dual_norm_eq}
        \Vert r+B'p_{\mu^k}\Vert_*=w_k'(r+B'p_{\mu^k}).
    \end{equation}
    Since $p_{\mu^k}\to p^*$, taking limits on both sides of $k\in \mathcal{K}$ and in view of the continuity of dual norm, we have that
    $$\Vert r+B'p^*\Vert_*=\Bar{w}'(r+B'p^*).$$
    Together with $\Vert \Bar{w}\Vert =1$, this implies that $\Bar{w}\in \mathcal{S}(r+B'p^*)$, which concludes the proof.
\end{proof}

\begin{proof}[Proof of Prop.~\ref{prop:optimistic_pi_norm}]
    The existence of $p_0\in \Re^n_+$ that fulfills the inequality $p_0\geq F(p_0)$ is ensured in view of Prop.~\ref{prop:unique_norm} as $J^*<\infty$. The well-posedness of optimistic PI can be shown in a similar way in which the well-posedness of PI is established. Convergence of the sequence $\{p_k\}$ and the stability of matrices $(A+BL_k)$ can be shown by using the identical arguments of those in the related parts of the proof of Prop.~\ref{prop:optimistic_pi} with $F$ and $F_{L_{k+1}}$ in place of $G$ and $G_{L_{k+1}}$, respectively. 
    
    The property of the limit points of $\{L_k\}$ can be shown by using identical arguments as those in the corresponding part of the proof for Prop.~\ref{prop:pi_norm} with $p_k$ in place of $p_{\mu^k}$.
\end{proof}

\begin{proof}[Proof of Prop.~\ref{prop:optimization}]
    The majority part of proof can be obtained by using identical arguments as those in the proof of Prop.~\ref{prop:linear} with $F$ in place of $G$. 
    
    Some subtle difference in the arguments is required when $J^*(\hat{x})=\infty$ for some $\hat{x}\in X$. Assume that the optimizaiton program \eqref{eq:optimization} is bounded. Then we may add constraint $\gamma\leq \Bar{\gamma}$ for sufficiently large $\Bar{\gamma}$ without loss of generality. As discussed earlier, the constraint \eqref{eq:optimization_ineq} specifies a closed convex set. Then together with other constraints and the redundant constraint $\gamma\leq \Bar{\gamma}$, the feasible region of \eqref{eq:optimization} is nonempty (containing at least $p=0,\gamma =\Vert r\Vert_*$), convex, closed, and bounded. Then it admits a solution $\hat{p}$. 
\end{proof}

\section{Concluding Remarks}
We studied optimal control problems with positive linear systems, linear stage costs, and control constraints. We addressed two types of control constraints: one involving bounds on the elements of control and the other involving bounds on the norms of control. In both cases, we applied the generic DP theory for nonnegative cost problems and a form of the Frobenius-Perron theorem for the analysis. In particular, we provided conditions under which the solutions to the associated Bellman's equations are unique and analyzed the stability of the optimal policies. Based on these results, we also showed the convergence of VI, PI, and optimistic PI applied to the problems, as well as the equivalence between finite optimal costs and boundedness of the associated optimization programs.

Much of our analysis relied upon the assumption that the optimal costs are finite for all states. This assumption is equivalent to the stabilizable condition adapted to our context. If one assumes that the state can reach the origin within a finite number of steps, then an alternative framework developed in \cite{bertsekas2015value} can be brought to bear. 

Apart from the tools applied in our study, our results can also be interpreted via the abstract DP framework involving the concept of \emph{regularity} of policies, which means stabilizable policy within our context. The related theory is referred to as the \emph{semicontractive DP}, given in \cite[Chapter~3]{bertsekas2022abstract}. The name reflects the fact that in the problems studied via this theory, some policies are `well-behaved' (in certain sense) while others do not. Much of the theory reported in this work can also be developed within the semicontractive DP framework, and a result that is particularly relevant is \cite[Prop.~3.3.1]{bertsekas2022abstract}. Another abstract framework within which our results can be interpreted suitably is the recent work \cite{sargent2023completely}, where various forms of the DP algorithms are characterized by partial orders. These frameworks are elegant and can be used for unifying the results of this work with other classical results, such as those in the linear quadratic problems. On the other hand, it seems most convenient to take the approach of this work if one wishes to simplify the original infinite-dimensional problem to a finite one.  

\section*{Acknowledgement}
The authors would like to thank Prof. John Stachurski for the helpful comments and for bringing up the connection between the observability condition and the irreducibility property discussed in the Appendix.

\appendix

\section{Discussions on Observability}
In Assumptions~\ref{asm:1} and \ref{asm:2}, we require the inequalities
\begin{equation}
    \label{eq:app_ob_ab}
    (s'- |r'|E)\sum_{i=0}^{n-1}(A-|B|E)^i>0
\end{equation}
and 
$$(s'-\Vert r\Vert_*N')\sum_{i=0}^{n-1}\big(A- [\Vert B_1'\Vert_*\dots \Vert B_n'\Vert_*]'N\big)^i>0$$
hold, respectively. In the subsequent developments, these inequalities play the role that the observability condition in the classical linear quadratic problem acts, see, e.g., \cite[Definition~3.1.1]{bertsekas2017dynamic}, hence the name. To highlight the similarity even further, we provide the following result.

\begin{proposition}\label{prop:ob}
Let $v\in \Re^n_+$ and $M\in \Re^{n\times n}$ such that $M\geq 0$. The inequality 
$$v'\sum_{i=0}^{n-1}M^i>0$$
holds if and only if there exists some positive integer $\ell\geq1$ such that
\begin{equation}
        \label{eq:ob}
        v'\sum_{j=0}^{\ell-1}M^j>0.
    \end{equation}
\end{proposition}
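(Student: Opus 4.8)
The plan is to dispatch one implication trivially and devote the work to the other. The implication from $v'\sum_{i=0}^{n-1}M^i>0$ to the existence of a suitable $\ell$ is immediate: simply take $\ell=n$, since $\sum_{i=0}^{n-1}M^i=\sum_{j=0}^{\ell-1}M^j$. Hence the substance lies in showing that if $v'\sum_{j=0}^{\ell-1}M^j>0$ for \emph{some} $\ell\ge1$, then already $v'\sum_{i=0}^{n-1}M^i>0$. The case $v=0$ is degenerate---every partial sum is zero, so neither inequality can hold and the equivalence is vacuous---so I assume $v\neq0$.

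First I would exploit nonnegativity to pass from inequalities to supports. Since $v\ge0$ and $M\ge0$, each row vector $v'M^{j}$ is nonnegative, so a coordinate of a partial sum $v'\sum_{j=0}^{\ell-1}M^j$ is positive precisely when it is positive in at least one summand. Writing $\mathrm{supp}(w)=\{k\mid w_k>0\}$ for a nonnegative row vector $w$, this means $T_\ell:=\mathrm{supp}\big(v'\sum_{j=0}^{\ell-1}M^j\big)=\bigcup_{j=0}^{\ell-1}\mathrm{supp}(v'M^j)\subseteq\{1,\dots,n\}$, and the claim $v'\sum_{j=0}^{\ell-1}M^j>0$ is exactly $T_\ell=\{1,\dots,n\}$. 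The goal thus becomes purely combinatorial: if $T_\ell=\{1,\dots,n\}$ for some $\ell$, then $T_n=\{1,\dots,n\}$.

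The key step I would establish is a stabilization property of the nondecreasing sequence $T_1\subseteq T_2\subseteq\cdots$. Define on subsets $S\subseteq\{1,\dots,n\}$ the successor map $\sigma(S)=\{k\mid M_{ik}>0 \text{ for some } i\in S\}$ induced by the positivity pattern of $M$; it is monotone and distributes over unions. Because the support of $v'M^{j+1}=(v'M^j)M$ depends on $v'M^j$ only through its support, one has $\mathrm{supp}(v'M^{j+1})=\sigma(\mathrm{supp}(v'M^j))$, hence $T_\ell=\bigcup_{j=0}^{\ell-1}\sigma^{j}(\mathrm{supp}(v))$. I would then show that if the sequence stalls at one step, i.e. $T_{\ell+1}=T_\ell$ (equivalently $\sigma^{\ell}(\mathrm{supp}(v))\subseteq T_\ell$), then applying $\sigma$ and using its monotonicity and distributivity over the union defining $T_\ell$ gives $\sigma(T_\ell)\subseteq T_\ell$ and in particular $\sigma^{\ell+1}(\mathrm{supp}(v))\subseteq T_\ell$, so $T_{\ell+2}=T_{\ell+1}$; by induction the sequence is constant from that point on. Equivalently, reading $M$ as the adjacency matrix of a digraph on $\{1,\dots,n\}$, $T_\ell$ is the set of nodes reachable from $\mathrm{supp}(v)$ by walks of length at most $\ell-1$.

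Finally, a cardinality argument closes the proof. The sequence $T_1\subseteq T_2\subseteq\cdots$ sits inside the $n$-element set $\{1,\dots,n\}$ and strictly increases until it stalls, after which it is constant; since a strictly increasing chain of subsets of an $n$-element set has length at most $n$, it must stabilize by index $n$, so $T_n=\bigcup_{\ell\ge1}T_\ell$ and thus $T_\ell\subseteq T_n$ for all $\ell$. Consequently, if $T_\ell=\{1,\dots,n\}$ for some $\ell$, then $T_n=\{1,\dots,n\}$, which is exactly $v'\sum_{i=0}^{n-1}M^i>0$. I expect the main obstacle to be the stabilization lemma: the crucial and slightly delicate point is that the support of $v'M^{j+1}$ is governed solely by the support of $v'M^{j}$---a consequence of the sign-constancy afforded by $v\ge0$, $M\ge0$---which is what makes the induced successor map well defined, monotone, and compatible with unions. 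Everything else is bookkeeping with finite sets.
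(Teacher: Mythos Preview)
Your proof is correct, but it takes a genuinely different route from the paper's. The paper argues the nontrivial direction contrapositively via the Cayley--Hamilton theorem: if the $k$th coordinate of $v'\sum_{i=0}^{n-1}M^i$ vanishes, then by nonnegativity each $v'M^i$ has zero $k$th coordinate for $i=0,\dots,n-1$; since every $M^j$ with $j\ge n$ is a real linear combination of $I,M,\dots,M^{n-1}$, the $k$th coordinate of $v'M^j$ vanishes for all $j$, so no $\ell$ can make the strict inequality hold. Your argument instead works purely at the level of supports: you track the nondecreasing chain $T_\ell=\bigcup_{j<\ell}\mathrm{supp}(v'M^j)$, identify it with reachability in the digraph of $M$, and use a pigeonhole/stabilization argument to show $T_\ell\subseteq T_n$ for all $\ell$. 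The paper's proof is shorter and exploits a single algebraic hammer; yours is more elementary (no Cayley--Hamilton), makes the graph-theoretic content explicit, and would survive in settings where the minimal polynomial has degree greater than $n$ is not an issue but linear-algebraic identities are unavailable (e.g., over semirings). Both are perfectly valid for the claim at hand.
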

\begin{proof}
    The only if part is obvious. To see that it holds conversely, suppose that the $k$th element of $v'\sum_{i=0}^{n-1}M^i$ is zero. We will show that the strict inequality in \eqref{eq:ob} does not hold for all $\ell$. Indeed, as $v\geq 0$ and $M\geq 0$, the $k$th elements of row vectors $v'M^i$, $i=0,\dots,n-1$, must all be zero. For every $j>n-1$, the matrix $M^j$ can be expressed as linear combinations (with real coefficients) of $M^i$, $i=0,\dots,n-1$, due to Cayley–Hamilton theorem (see, e.g., \cite[Theorem~2.4.3.2]{horn2012matrix}). As a result, the $k$th element of $v'M^j$ is also zero, which concludes the proof.
\end{proof}

Let us focus on the absolute value bound problem for the subsequent discussion, and entirely similar conclusions can be drawn for the norm bound problem. Suppose that $s>E'|r|$ (instead of $s\geq E'|r|$), then the stage cost $g(x,u)>0$ for all $x\neq0$. All the theories developed earlier still hold and some of the proof arguments can be simplified. Clearly, this is a special case in which the inequality
$$(s'- |r'|E)\sum_{i=0}^{n-1}(A-|B|E)^i>0$$
holds. The relations between the two conditions are reminiscent of those imposed on the cost coefficient in the linear quadratic problem, and $s>E'|r|$ corresponds to the coefficient matrix being positive definite.

The observability condition \eqref{eq:app_ob_ab} also has a close connection with the \emph{irreducibility} property of the matrix $(A-|B|E)$. In particular, the nonnegative matrix $(A-|B|E)$ is irreducible if and only if 
$$\sum_{i=0}^{n-1}(A-|B|E)^i>0,$$
cf. \cite[Lemma~8.4.1]{horn2012matrix}. As a result, a sufficient condition for the observability condition is that $(A-|B|E)$ is irreducible, and at least one element of $(s'- |r'|E)$ is nonzero. However, the irreducibility of $(A-|B|E)$ is not necessary. To see this, consider
$$s-E'|r|=\begin{bmatrix}
0 \\
1
\end{bmatrix},\quad A-|B|E=\begin{bmatrix}
1 & 0\\
1 & 0
\end{bmatrix}.$$
One can verify that $(A-|B|E)$ is not irreducible, yet the observability condition \eqref{eq:app_ob_ab} holds.
\bibliographystyle{alpha}
\bibliography{ref}
\end{document}